\documentclass[a4paper,twoside,reqno]{amsart}

\usepackage{graphicx}
\usepackage{a4wide}
\usepackage{amstext, amsmath, amssymb, amsthm}
\usepackage{latexsym}
\usepackage{booktabs}
\usepackage{url}
\usepackage{import}
\usepackage{fancyvrb}
\usepackage{stmaryrd}
\usepackage{pdfsync}
\usepackage{algorithm}
\usepackage{tikz}
\usepackage{pgfplots}
\usepackage{pgfplotstable}
\usepackage{tabularx}
\usepackage{todonotes}
\usepackage[numbers,square,sort&compress]{natbib}
\usepackage{caption}
\usepackage{subcaption}
\usepackage{enumitem}

\DeclareMathOperator{\Div}{div}

\DeclareMathOperator{\dist}{dist}

\newcommand{\RR}{\mathbb{R}}

\newcommand{\VV}{\mathbb{V}}

\newcommand{\mcQ}{\mathcal{Q}}
\newcommand{\mcV}{\mathcal{V}}
\newcommand{\mcK}{\mathcal{K}}

\newcommand{\mcW}{\mathcal{W}}
\newcommand{\mcT}{\mathcal{T}}
\newcommand{\mcH}{\mathcal{H}}

\newcommand{\tn}{|\mspace{-1mu}|\mspace{-1mu}|}

\newcommand{\Gammah}{{\Gamma_h}}
\newcommand{\nablas}{\nabla_\Gamma}
\newcommand{\nablash}{\nabla_{\Gamma_h}}
\newcommand{\divs}{\Div_{\Gamma}}

\newcommand{\Ps}{{P_{\Gamma}}}
\newcommand{\Qs}{{Q_{\Gamma}}}
\newcommand{\Psh}{{P}_{\Gamma_h}}
\newcommand{\Qsh}{{Q}_{\Gamma_h}}

\newcommand{\foralls}{\forall\,}

\newcommand{\dr}{\,\mathrm{d}r}

\newcommand{\ds}{\,\mathrm{d} \Gamma}
\newcommand{\dsh}{\,\mathrm{d} \Gamma_h}

\newcommand{\onehalf}{\frac{1}{2}}

\usepackage[plainpages=false]{hyperref}
\hypersetup{
  bookmarksopen=true,
                bookmarksnumbered=true,
                pdfborder={0 0 0},
                pdftitle= {Stabilized Cut Finite Element Methods for the Darcy Problem on Surfaces},
                pdfauthor={P. Hansbo, M.G. Larson, A. Massing},
                pdfcreator={P. Hansbo, M.G. Larson, A. Massing},
                colorlinks=true,
                linkcolor=blue,
                urlcolor=blue
        }

\usepackage{verbatim} 
\DefineVerbatimEnvironment{code}{Verbatim}{frame=single,rulecolor=\color{blue}}

        \newtheorem{theorem}{Theorem}[section]

\newtheorem{lemma}[theorem]{Lemma}
\newtheorem{corollary}[theorem]{Corollary}

\theoremstyle{remark}
\newtheorem{remark}[theorem]{Remark}

\theoremstyle{definition}

\numberwithin{equation}{section}
\numberwithin{figure}{section}
\numberwithin{table}{section}

\begin{document}
\title[A Stabilized CutFEM for the Darcy Surface Problem]{\bf
  A Stabilized Cut Finite Element Method for the Darcy Problem on Surfaces}

\subjclass[2010]{Primary 65N30; Secondary 65N85, 58J05.}

\author[P. Hanbso]{Peter Hansbo}
\address[Peter Hansbo]{Department of Mechanical Engineering, J\"onk\"oping University,
SE-55111 J\"onk\"oping, Sweden.}
\email{peter.hansbo@ju.se}

\author[M. G. Larson]{Mats G.\ Larson}
\author[A. Massing]{Andr\'e Massing}
\address[Mats G.\ Larson, Andr\'e Massing]{Department of Mathematics and Mathematical Statistics, Ume{\aa} University, SE-90187 Ume{\aa}, Sweden}
\email{mats.larson@umu.se}
\email{andre.massing@umu.se}

\date{\today}

\keywords{Surface PDE, Darcy Problem, cut finite element
method, stabilization, condition number, a priori error estimates}

\maketitle

\begin{abstract}
{We develop a cut finite element method for the Darcy problem on
  surfaces.  The cut finite element method is based on embedding the
  surface in a three dimensional finite element mesh and using finite
  element spaces defined on the three dimensional mesh as trial and test
  functions.
  Since we consider a partial differential equation on a
  surface, the resulting discrete weak problem might be severely ill conditioned.
  We propose a full gradient and a normal gradient based stabilization
  computed on the background mesh to render the proposed
  formulation stable and well conditioned irrespective of the surface
  positioning within the mesh.
  Our formulation extends and simplifies the Masud-Hughes stabilized
  primal mixed formulation of the Darcy surface problem proposed
  in~\cite{HansboLarson2016} on fitted triangulated surfaces.
  The tangential condition on the velocity and the pressure
  gradient is enforced only weakly, avoiding the need for any
  tangential projection.
  The presented numerical analysis accounts for different polynomial orders for the
  velocity, pressure, and geometry approximation which are corroborated
  by numerical experiments.
  In particular, we demonstrate both theoretically and through numerical results
  that the normal gradient stabilized variant results in a high order scheme.
  }
\end{abstract}

\section{Introduction}
\label{sec:introduction}

\subsection{Background and Earlier Work}
In recent years, there has been a rapid development of cut finite
element methods, also called trace finite element methods, for the
numerical solution of partial differential equations (PDEs) on complicated or
evolving surfaces embedded into $\RR^d$.
The main idea
is to use the restriction of finite element basis
functions defined on a $d$-dimensional background mesh to a discrete,
piecewise smooth surface representation which is allowed to cut
through the mesh in an arbitrary fashion.
The active background mesh then consists of all elements which are cut
by the discrete surface, and the finite element space restricted to
the active mesh is used to discretize the surface PDE.
This approach was first proposed in \cite{OlshanskiiReuskenGrande2009}
for the Laplace-Beltrami on a closed surface,
see also~\cite{BurmanClausHansboEtAl2014} and the references therein
for an overview of cut finite element techniques.

Depending on the positioning of the discrete surface
within the background mesh,
the resulting system matrix might be severely ill conditioned
and either preconditioning 
\cite{OlshanskiiReusken2014} or 
stabilization \cite{BurmanHansboLarson2015} is 
necessary to obtain a well conditioned linear system.
The stabilization introduced and analyzed
in~\cite{BurmanHansboLarson2015} is based on so called face
stabilization or ghost penalty, which provides control over the
jump in the normal gradient across interior faces in the active mesh.
In particular, it was shown that the condition number scaled in an
optimal way, independent of how the surface cut the background mesh.
Thanks its versatility, the face based stabilization can naturally be
combined with discontinuous cut finite element methods as demonstrated
in~\cite{BurmanHansboLarsonEtAl2016a}.
To reduce the matrix stencil and ease the implementation,
a particular simple low order, full gradient stabilization
using continuous piecewise linears
was developed and
analyzed in~\cite{BurmanHansboLarsonEtAl2016c}
for the Laplace-Beltrami operator.
Then a unifying abstract framework for analysis of cut finite element
methods on embedded manifolds of arbitrary codimension was developed
in~\cite{BurmanHansboLarsonEtAl2016} and, in particular, the normal
gradient stabilization term was introduced and analyzed.
Further developments include convection
problems~\cite{BurmanHansboLarsonEtAl2015b, OlshanskiiReuskenXu2014},
coupled bulk-surface problems~\cite{BurmanHansboLarsonEtAl2014, GrossOlshanskiiReusken2014}
and higher order versions of trace fem for the Laplace-Beltrami
operator~\cite{Reusken2014,GrandeLehrenfeldReusken2016}.
Moreover, extensions to time-dependent, parabolic-type problems on evolving
domains were proposed in~\cite{HansboLarsonZahedi2016, OlshanskiiReuskenXu2014a}.

So far, with their many applications to fluid dynamics, material science
and biology, e.g.,
\cite{GanesanTobiska2009, GrossReusken2011, NovakGaoChoiEtAl2007, EilksElliott2008, ElliottStinnerVenkataraman2012, BarreiraElliottMadzvamuse2011},
mainly scalar-valued,
second order elliptic and parabolic type equations 
have been considered in the context of
cut finite element methods for surface PDEs.
Only very recently, vector-valued surface PDEs
in combination with unfitted finite element technologies
have been considered, for instance in the numerical discretization
of surface-bulk problems modeling flow dynamics
in fractured porous
media~\cite{FumagalliScotti2013,
DelPraFumagalliScotti2015,
AntoniettiFacciolaRussoEtAl2016,
FlemischFumagalliScotti2016}.
But while these contributions employed cut finite element type methods
to discretize the bulk equations, only fitted (mixed and stabilized)
finite elements methods on triangulated surfaces have been developed for
vector surface equation such as the Darcy surface problem, see for
instance~\cite{FerroniFormaggiaFumagalli2016,HansboLarson2016}.
The present contribution is the first where a cut finite element
method for a system of partial differential equations on a surfaces
involving tangent vector fields of partial differential equations is
developed.

\subsection{New Contributions}
We develop a stabilized cut finite element method 
for the numerical solution of the Darcy problem on a surface.
The proposed variational formulation
follows the approach in~\cite{HansboLarson2016} 
for the Darcy problem on 
triangulated surfaces which is based on the stabilized
primal mixed formulation by~\citet{MasudHughes2002}.
Note that standard mixed type elements are typically not
available on discrete cut surfaces.
Combining the ideas from~\cite{HansboLarson2016} with the stabilized full
gradient formulations of the Laplace-Beltrami problem
from~\cite{BurmanHansboLarsonEtAl2016c,BurmanHansboLarsonEtAl2016},
the tangent condition on both the velocity and the pressure gradient is
enforced only weakly.
When employing finite element function from the full $d$-dimensional
background mesh, such a weak enforcement of the tangential condition
is convenient and rather natural.

To render the proposed formulation stable and well conditioned
irrespective of the relative surface position in the background mesh,
we consider two stabilization forms: the full gradient stabilization introduced in 
\cite{BurmanHansboLarsonEtAl2016c} which is convenient for 
low order elements, and the normal gradient stabilization 
introduced in \cite{BurmanHansboLarsonEtAl2016} which also 
works for higher order elements.
Through these stabilizations, we gain control of the variation of the
solution orthogonal to the surface, which in combination with the
Masud-Hughes variational formulation results in a coercive formulation
of the Darcy surface problem.
In practice, the exact 
surface is approximated leading to a geometric error which 
we take into account in the error analysis. We show stability 
of the method and establish optimal order a priori error estimates.
The presented numerical analysis also accounts for different polynomial orders for the
velocity, pressure, and geometry approximation.

\subsection{Outline}
The paper is organized as follows: In Section~\ref{sec:darcy-problem-cont} we present 
the Darcy problem on a surface together with its Masud-Hughes weak 
formulation, followed by the formulation of the cut finite element 
method in Section~\ref{sec:darcy-problem-cutfem}.
In Section~\ref{sec:preliminaries} we collect a number of preliminary theoretical 
results, which will be needed in Section~\ref{sec:a-priori-est}
where the main
a priori error estimates in the 
energy and $L^2$ norm are established.
Finally, in Section~\ref{sec:numerical-results} we present numerical results
illustrating the theoretical findings of this work.

\section{The Darcy Problem on a Surface}
\label{sec:darcy-problem-cont}

\subsection{The Continuous Surface}
\label{ssec:preliminaries}
In what follows, $\Gamma$ denotes a smooth compact hypersurface
without boundary which is embedded in ${{\RR}}^{k}$ and equipped with a
normal field $n: \Gamma \to \RR^{d}$ and signed distance
function $\rho$.
Defining
the tubular neighborhood of $\Gamma$ by
$U_{\delta_0}(\Gamma) = \{ x \in \RR^{d} : \dist(x,\Gamma) < \delta_0
\}$, the closest point projection $p(x)$ 
is the uniquely defined mapping given by
\begin{align}
  p(x) =  x - \rho(x) n(p(x))
\label{eq:closest-point-projection}
\end{align}
which maps $x \in U_{\delta_0}(\Gamma)$ to the unique point $p(x) \in
\Gamma$ such that $| p(x) - x | = \dist(x, \Gamma)$ for some $\delta_0 > 0$, see~\cite{GilbargTrudinger2001}.
The closest point projection allows the extension of a function $u$ defined on $\Gamma$ to its
tubular neighborhood $U_{\delta_0}(\Gamma)$ using the pull back
\begin{equation}
\label{eq:extension}
u^e(x) = u \circ p (x)
\end{equation}
In particular, we can smoothly extend the normal field $n_{\Gamma}$ to the tubular neighborhood $U_{\delta_0}(\Gamma)$.
On the other hand, 
for any subset $\widetilde{\Gamma} \subseteq U_{\delta_0}(\Gamma)$ such that 
$p: \widetilde{\Gamma} \to \Gamma $ is bijective, a function $w$ on
$\widetilde{\Gamma}$ can be lifted to $\Gamma$ by the push forward satisfying
\begin{align}
  (w^l)^e = w^l \circ p = w \quad \text{on } \widetilde{\Gamma}
\end{align}
Finally, for any function space $V$ defined on $\Gamma$, we denote
the space consisting of extended functions by $V^e$ and
correspondingly, the notation $V^l$ refers to the lift of a
function space~$V$ defined on $\widetilde{\Gamma}$. {{}

\subsection{The Surface Darcy Problem}
To formulate the Darcy problem on a surface, we first recall
the definitions of the surface gradient and divergence.
For a function $p: \Gamma \rightarrow \RR$
the tangential gradient of $p$ can be expressed as
\begin{equation}
  \nablas p = \Ps \nabla p^e,
  \label{eq:tangent-gradient}
\end{equation}
where $\nabla$ is the standard ${{\RR}}^{d}$ gradient and $\Ps = \Ps(x)$ denotes the
orthogonal projection of $\RR^{d}$ onto the tangent plane
$T_x\Gamma$ of $\Gamma$ at $x \in \Gamma$
given by
\begin{equation}
  \Ps = I - n \otimes n,
\end{equation}
where $I$ is the identity matrix.
Since $p^e$ is constant in the normal direction, 
we have the identity 
\begin{equation}\label{eq:grad-full}
\nabla p^e = \Ps \nabla p^e \quad \text{on $\Gamma$}.
\end{equation}
Next, the surface divergence of a vector field $u:\Gamma \rightarrow \RR^d$ 
is defined by
\begin{equation}
\divs(u) = \text{tr}(u \otimes \nablas ) 
= \text{div}(u^e) - n \cdot (u^e \otimes \nabla)\cdot n.
\end{equation}
With these definitions, the Darcy problem on the surface $\Gamma$
is to seek the tangential velocity vector field
$u_t:\Gamma \rightarrow T(\Gamma)$ 
and the pressure $p:\Gamma\rightarrow \RR$ 
such that
\begin{subequations}
  \label{eq:darcy-problem-cont}
\begin{alignat}{2}\label{eq:Darcya}
\divs u_t &= f \qquad &\text{on $\Gamma$},
\\ \label{eq:Darcyb}
u_t + \nablas p &= g \qquad &\text{on $\Gamma$}.
\end{alignat}  
\end{subequations}
Here, $f:\Gamma \rightarrow \RR$ is a given function 
such that $\int_\Gamma f = 0$ and $g:\Gamma \rightarrow \RR^d$
is a given tangential vector field. 

\subsection{The Masud-Hughes Weak Formulation}
We follow \cite{HansboLarson2016} and base our finite element method 
on an extension of the Masud-Hughes weak formulation, originally 
proposed in \cite{MasudHughes2002} for planar domains, to the surface 
Darcy problem.
Using Green's formula 
\begin{equation}\label{eq:Greensdiv}
(\divs v_t, q)_\Gamma = -(v_t, \nablas q)_\Gamma
\end{equation}
valid for tangential vector fields $v_t$, a direct application of
the original Masud-Hughes formulation is built upon the fact
that the Darcy problem~(\ref{eq:darcy-problem-cont}) solves the weak problem
\begin{equation}
  \label{eq:darcy-problem-weak-tang}
  a((u_t, p), (v_t, q)) = l((v_t, q))
\end{equation}
for test functions $v \in [L^2(\Gamma)]$ and $q \in H^1(\Gamma)/\RR$ with
\begin{align}
  \label{eq:darcy-problem-weak-tang-a}
  a((u_t, p), (v_t, q))
  &=
  (u_t, v_t)_{\Gamma} + (\nablas p, v_t)_{\Gamma} - (u_t, \nablas q)_{\Gamma}
  + \onehalf (u_t + \nablas p, - v_t + \nablas q)_{\Gamma}
  \\
  \label{eq:darcy-problem-weak-tang-l}
  l((v,q))
  &= (f, q)_{\Gamma} + (g, v_t)_{\Gamma} + \onehalf (g, - v_t + \nablas q)_{\Gamma}
\end{align}
As in \cite{HansboLarson2016} we enforce the tangent condition 
on the velocity weakly by using full velocity fields
in formulation~\eqref{eq:darcy-problem-weak-tang}  
and not only their tangential projection.
But in contrast to~\cite{HansboLarson2016} we also employ the identity
(\ref{eq:grad-full}) to replace the pressure related tangent gradients with the full
gradient in order to simplify the implementation further.
Earlier, such full gradient based formulation
have been developed for the Poisson problem on the surface,
see~\cite{Reusken2014,BurmanHansboLarsonEtAl2016c}.
With $\mcV = [L^2(\Gamma)]^3$ as the velocity space, $\mcQ=H^1(\Gamma)/\RR$ as the pressure space
and $\VV = \mcV \times \mcQ$ as the total space,
the resulting Masud-Hughes weak formulation of the Darcy surface problem~(\ref{eq:darcy-problem-cont})
is to seek $U := (u,p) \in \VV$ satisfying
\begin{equation}
  \label{eq:mhweak}
A(U, V) = L(V) \quad 
\end{equation}
for all $V := (v,q) \in \VV$, where 
\begin{align}
  A(U,V) 
  &=(u,v)_\Gamma 
  +(\nabla p,v)_{\Gamma}  
  -(u, \nabla q)_\Gamma 
  +\frac{1}{2}(u + \nabla p,-v + \nabla q)_\Gamma,
  \label{eq:a}
  \\
  L(V) &= (f,q)_\Gamma + (g,v)_\Gamma
  +\frac{1}{2}(g,-v + \nabla q)_\Gamma.
  \label{eq:l}
\end{align} 
Expanding the forms, the bilinear form $A$ and linear form $L$ can be rewritten as
\begin{align}
A(U,V)  
&= 
\frac{1}{2}(u,v)_\Gamma 
+ \frac{1}{2}(\nabla p,\nabla q)_\Gamma
+\frac{1}{2}(\nabla p,v)_{\Gamma}  
-\frac{1}{2}(u, \nabla q)_\Gamma,
\\
L(V) 
&= 
(f,q)_\Gamma  
+ \frac{1}{2}(g,v + \nabla q)_\Gamma
\end{align} 
and consequently, the bilinear form $A$ consists of a symmetric positive 
definite part and a skew symmetric part.
For a more detailed discussion on various weak formulation of the surface Darcy problem,
we refer to~\cite{HansboLarson2016}.
Finally, note that since $\Gamma$ is smooth and $p\in Q$ is the solution to the 
elliptic problem $\divs(\nablas p) = \divs u_t - \divs g 
= f  - \divs g$, the following elliptic regularity 
estimate holds
\begin{equation}\label{eq:ellreg}
\| u_t \|_{H^{s+1}(\Gamma)} 
+
\| p \|_{H^{s+2}(\Gamma)} 
\lesssim \|f \|_{H^{s}(\Gamma)}
+
\| g \|_{H^{s+1}(\Gamma)}
\end{equation}

\section{Cut Finite Element Methods for the Surface Darcy Problem}
\label{sec:darcy-problem-cutfem}

\subsection{The Discrete Surface and Active Background Mesh}
\label{ssec:domain-disc}
For $\Gamma$, we assume that the discrete surface approximation $\Gammah$ is
represented by a piecewise smooth surface
consisting of smooth $d-1$ dimensional surface parts $\mcK_h = \{ K\} $
associated with a piecewise smooth normal field $n_h$.
On $\Gamma_h = \bigcup_{K \in \mcK_h} K$ we can then define
the discrete tangential projection $\Psh$
as the pointwise orthogonal projection on the $d$-dimensional tangent space defined
for each $x \in K$ and $K\in \mcK_h$.
We assume that:
\begin{itemize}
  \item $\Gamma_h \subset U_{\delta_0}(\Gamma)$ and that the closest
    point mapping $p:\Gamma_h \rightarrow \Gamma$ is a bijection for
    $0 < h \leq h_0$.
  \item The following estimates hold
    \begin{equation} \
      \| \rho \|_{L^\infty(\Gamma_h)} \lesssim h^{k_g+1}, \qquad
      \| n^e - n_h \|_{L^\infty(\Gamma_h)} \lesssim h^{k_g}.
      \label{eq:geometric-assumptions-II}
    \end{equation}
  \end{itemize}
Let $\widetilde{\mcT}_{h}$ be a quasi-uniform mesh, with mesh parameter
$0<h\leq h_0$, which consists of shape regular closed simplices
and covers some open and bounded domain $\Omega \subset \RR^{k}$ containing
the embedding neighborhood $U_{\delta_0}(\Gamma$).
For the background mesh $\widetilde{\mcT}_{h}$ we define the \emph{active} (background)
$\mcT_h$ mesh 
\begin{align}
  \mcT_h &= \{ T \in \widetilde{\mcT}_{h} : T \cap \Gamma_h \neq \emptyset \},
  \label{eq:narrow-band-mesh}
\end{align}
see~Figure~\ref{fig:domain-set-up} for an illustration. Finally,
for the domain covered by $\mcT_h$ we introduce the notation
\begin{align}
  N_h &= \cup_{T \in \mcT_h} T.
  \label{eq:Nh-def}
\end{align}
\begin{figure}[htb]
  \begin{center}
    \includegraphics[width=0.45\textwidth]{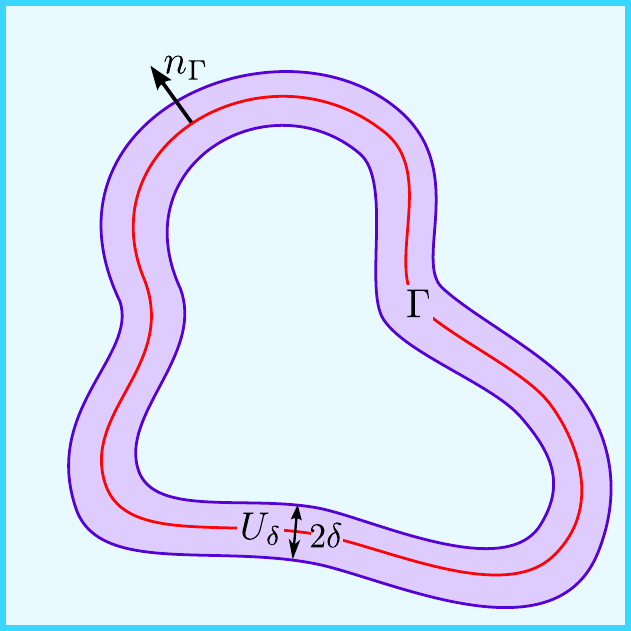}
    \hspace{0.03\textwidth}
    \includegraphics[width=0.45\textwidth]{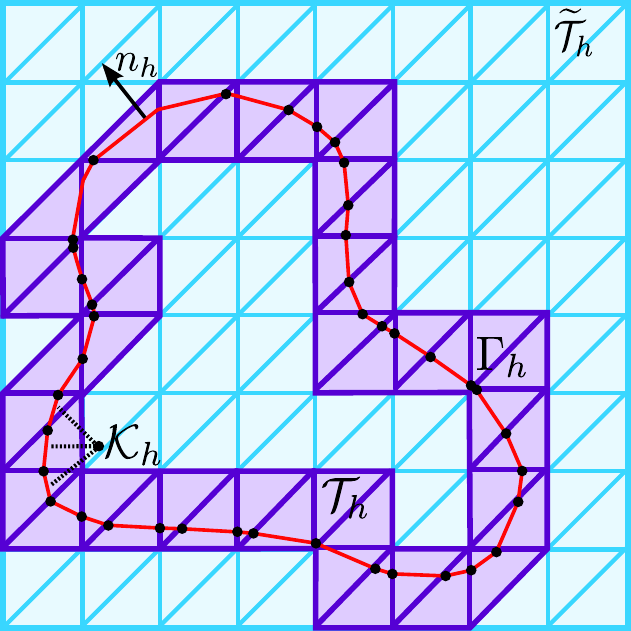}
  \end{center}
  \caption{Set-up of the continuous and discrete domains. (Left) Continuous surface $\Gamma$
  enclosed by a $\delta$ tubular neighborhood $U_{\delta}(\Gamma)$.
  (Right) Discrete manifold $\Gamma_h$ embedded into a background mesh
  $\widetilde{\mcT}_h$ from which the active (background) mesh $\mcT_h$ is extracted.
  }
  \label{fig:domain-set-up}
\end{figure}

\subsection{Stabilized Cut Finite Element Methods}
On the active mesh $\mcT_h$ we introduce
the discrete space of continuous piecewise polynomials of order $k$,
\begin{equation}
  X_h^k = \{ v \in C(N_h) : v|_T \in  P_k(T) \; \foralls T \in \mcT_h \}.
  \label{eq:Xh-def}
\end{equation}
Occasionally, if the order is not of particular importance, we simply drop the superscript $k$.
Next, the discrete velocity, pressure and total approximations spaces are defined by
respectively
\begin{align}
  \mcV_h = [X_h^{k_u}]^d,
  \qquad
  \mcQ_{h} = \{ v \in X_h^{k_p} : \lambda_{\Gamma_h}(v) = 0 \},
  \label{eq:Qh-def}
  \qquad
  \mcW_h = \mcV_h \times \mcQ_h,
\end{align}
where we explicitly permit different approximation orders $k_u$ and $k_p$
for the velocity and pressure space.
As in the continuous case, $\lambda_{\Gamma_h}(\cdot)$
denotes the average operator on $\Gammah$ defined by $\lambda_{\Gamma_h}(v) =
\tfrac{1}{\Gammah}\int_{\Gamma_h} v$.
Now the stabilized, full gradient cut finite element method for the
surface Darcy problem~(\ref{eq:darcy-problem-cont}) is to
seek $U_h := (u_h, p_h)  \in  \VV_h$ such that for all $V := (v, p) \in \VV_h$,
\begin{align}
  B_h(U_h, V) = L_h(V) 
  \label{eq:darcy-probl-cutfem}
\end{align}
where
\begin{align}
  B_h(U_h, V) &= A_h(U_h, V) + S_h(U_h, V) = L_h(V),
  \\
  \label{eq:Ah-def}
  A_h(U_h, V) &=  (u_h,v)_{\Gammah}
                    + (\nabla p, v)_{\Gammah}
                    - (u_h,\nabla q)_{\Gammah}
                    + \onehalf ( u + \nabla p, -v + \nabla
                    q)_{\Gammah},
 \\
  S_h(U_h, V) &= s_h(u_h,v_h) + s_h(p_h,q_h),
  \label{eq:Sh-def}
  \\
  L_h(V) &= (f,q) + \onehalf (g, v + \nabla q)_{\Gamma}.
  \label{eq:Lh-def}
\end{align}
For the stabilization form $s_h$, two realizations will be proposed in this work.
First, we consider a full gradient based stabilization originally introduced for
Laplace-Beltrami surface problem in~\cite{BurmanHansboLarsonEtAl2016c},
\begin{alignat}{3}
  s_h^1(u_h, v_h) &= h (\nabla u_h, \nabla v_h)_{\mcT_h},
                    \qquad
  & & s_h^1(p_h, q_h) = h (\nabla p_h, \nabla q_h)_{\mcT_h}.
  \\
  \intertext{Second, to devise a higher order approximation scheme,
  the normal gradient stabilization}
  s_h^2(u_h, v_h) &= h (n_h \cdot \nabla u_h, n_h \cdot \nabla v_h)_{\mcT_h},
                    \qquad
  & &s_h^2(p_h, q_h) = h (n_h \cdot \nabla p_h, n_h \cdot \nabla q_h)_{\mcT_h}
\end{alignat}
first proposed and analyzed in~\cite{BurmanHansboLarsonEtAl2016}
and then later also considered in~\cite{GrandeLehrenfeldReusken2016},
will be employed. In the remaining work, we will simply write $S_h$ and $s_h$
without superscripts as long as no distinction between the stabilization forms is needed.
\begin{remark}
  For the normal gradient stabilization,
  any $h$-scaling of the form $h^{\alpha - 1}$ with $\alpha \in [0,2]$ gives
  an eglible stabilization operator, as pointed out in~\cite{BurmanHansboLarsonEtAl2016}.
  The condition $\alpha \leqslant 2$ guarantees that the stabilization
  result~\ref{lem:scaled-L2-norm-Nh-stab}
  for a properly scaled $L^2$ norm holds, the condition $\alpha
  \geqslant 0$ on the other hand assures that the condition number
  of the discrete linear system scales with the mesh size similar to
  the triangulated surface
  case. We refer to~\cite{BurmanHansboLarsonEtAl2016} for further details.
\end{remark}

\section{Preliminaries}
\label{sec:preliminaries}
To prepare the forthcoming analysis of the proposed cut finite element method
in Section~\ref{sec:a-priori-est},
we here collect and state a number of useful definitions, approximation results
and estimates.
In particular, we introduce suitable continuous and discrete norms,
review the construction of a proper interpolation operator and
recall the fundamental geometric estimates needed to quantify the
quadrature errors introduced by the discretization of $\Gamma$.

\subsection{Discrete Norms and Poincar\'e Inequalities}
\label{ssec:discrete-norms-poincare}
The symmetric parts of the bilinear forms $A$ and $A_h$
give raise to the following natural continuous and discrete ``energy''-type norms
\begin{align}
  \tn  U \tn^2 = \| u \|_{\Gamma}^2 + \| \nabla p \|_{\Gamma}^2,
  \qquad
  \tn  U_h \tn_h^2 = \| u_h \|_{\Gammah}^2 + \| \nabla p_h \|_{\Gammah}^2 +
  | U_h |_{S_h},
  \label{eq:triple-norm-def}
\end{align}
where $|\cdot|_{S_h}$ denotes the semi-norm induced by the
stabilization form $S_h$, 
\begin{align}
  | U_h |_{S_h}^2 
  = S_h(U_h, U_h).
  \label{eq:sh-norm-def}
\end{align}
To show that $\tn \cdot \tn_h$ actually defines a proper norm,
we recall the following result from~\cite{BurmanHansboLarsonEtAl2016}.
\begin{lemma}
  \label{lem:scaled-L2-norm-Nh-stab} For $v \in X_h$, the following estimate holds
  \begin{align}
    h^{-1}\| v \|^2_{\mcT_h}
    & \lesssim
      \| v \|_{\Gamma_h}^2
      + s_h^i(v,v)
      \quad \text{for } i = 1,2,
    \label{eq:discrete-poincare-Nh-normal-grad}
  \end{align}
  for $0<h \leq h_0$ with $h_0$ small enough.
\end{lemma}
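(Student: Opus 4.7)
The plan is to establish the estimate element-by-element on the active mesh $\mcT_h$ and then sum, the key ingredient being a fundamental-theorem-of-calculus bound along short paths from each element $T \in \mcT_h$ to the discrete surface $\Gamma_h$. The target of the local step is, for every $T \in \mcT_h$ and every $v \in X_h$,
\begin{align*}
\|v\|_T^2 \lesssim h\,\|v\|^2_{\Gamma_h \cap \omega_T} + h^2\,\|D^i v\|^2_{\omega_T},
\end{align*}
where $\omega_T \subset \mcT_h$ is a bounded-size patch around $T$, and $D^1 v := \nabla v$ for the full-gradient case, $D^2 v := n_h \cdot \nabla v$ for the normal-gradient case. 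Once this local estimate is established, summing over $T$ with bounded overlap of the $\omega_T$, recalling that $s_h^i(v,v) = h\,\|D^i v\|^2_{\mcT_h}$, and dividing by $h$ yields the claim.

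For the full gradient ($i=1$) I would exploit that each $T \in \mcT_h$ satisfies $T \cap \Gamma_h \neq \emptyset$ by the definition of the active mesh. Shape regularity together with the geometric assumptions~\eqref{eq:geometric-assumptions-II} guarantees that for any $x \in T$ one can find $y \in \Gamma_h \cap T'$ for some $T' \in \omega_T$ with $|\Gamma_h \cap T'| \gtrsim h^{d-1}$ (neighbours are needed to accommodate grazing cuts) and a polygonal path $\gamma \subset \omega_T$ from $y$ to $x$ of length $\lesssim h$. Writing $v(x) = v(y) + \int_\gamma \nabla v \cdot d\tau$, squaring, applying Cauchy--Schwarz, and integrating $(x,y) \in T \times (\Gamma_h \cap T')$ produces the displayed local estimate.

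For the normal gradient ($i=2$) the same strategy works, but the path $\gamma$ must be chosen along the direction $n_h$ so that only $n_h \cdot \nabla v$ enters the line integral. Concretely, I would parametrize a tubular neighborhood of $\Gamma_h \cap \omega_T$ by the map $(y,t) \mapsto y + t\,n_h(y)$ with $|t| \lesssim h$, which is a bi-Lipschitz diffeomorphism for $h$ small enough thanks to $\Gamma_h \subset U_{\delta_0}(\Gamma)$ and $\|n^e - n_h\|_{L^\infty(\Gamma_h)} \lesssim h^{k_g}$. Then
\begin{align*}
v(y + t n_h(y)) = v(y) + \int_0^t (n_h \cdot \nabla v)(y + s n_h(y))\,ds + \mathcal{E}(y,t),
\end{align*}
where the remainder $\mathcal{E}$ reflects the variation of $n_h$ along the normal line. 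Squaring, integrating over $(y,t)$, and translating back to $T$ gives the local estimate with $D^2 v$ in place of $D^1 v$, provided $\mathcal{E}$ can be absorbed.

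The main obstacle is the normal-gradient case, specifically (i) constructing a valid bi-Lipschitz normal-coordinate chart on each element despite the piecewise smooth nature of $\Gamma_h$ and of $n_h$ (which may require piecing together parametrizations from the smooth pieces $K \in \mcK_h$ covering $\Gamma_h \cap \omega_T$), and (ii) showing that the error $\mathcal{E}$ and any cross-terms coming from the deviation of $n_h$ from a constant direction can be absorbed into the left-hand side without destroying the $h^{-1}$ scaling. Both issues are handled using the smallness of $\|n^e - n_h\|_{L^\infty}$ together with a standard inverse inequality for polynomials on shape-regular simplices, but only for $h$ sufficiently small---which is why the lemma must be stated only for $0 < h \leq h_0$. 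The summation step is then a routine bounded-overlap argument using shape regularity of $\widetilde{\mcT}_h$.
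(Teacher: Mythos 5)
The paper does not actually prove this lemma---it recalls it from \cite{BurmanHansboLarsonEtAl2016}---and the two control mechanisms you describe (integration along arbitrary short paths emanating from $\Gamma_h$ for $s_h^1$, and along the normal tube $(y,t)\mapsto y+t\,n_h(y)$ for $s_h^2$) are precisely the ones used there and sketched in the caption of Figure~\ref{fig:condition_number-example}. Your outline is sound and correctly isolates the genuinely delicate points, namely the fat-intersection covering of the active mesh (which requires smoothness of $\Gamma$ and $h\leq h_0$, not just shape regularity and \eqref{eq:geometric-assumptions-II}), the bi-Lipschitz property of the normal parametrization, the containment of the integration paths in $N_h$, and the absorption of the $n_h$-variation error via an inverse inequality together with $\| n^e - n_h\|_{L^\infty(\Gamma_h)}\lesssim h^{k_g}$; these are exactly the facts established in the cited reference.
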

\begin{remark}
  Simple counter examples show that the sole expression $\| v_h
\|_{\Gammah} + \| \nabla q_h \|_{\Gammah}$ defines only a semi-norm on
$\mcV_h \times \mcQ_h$. For instance, let $ \Gamma = \{ \phi = 0 \}$
be defined as the $0$ level set of a smooth, scalar function $\phi$
such that $\nabla \phi \neq 0$ on $\Gamma$. Take $k_u = 1$, $k_p = 2$
and let $\Gammah = \{\phi_h = 0 \}$ where $\phi_h \in X_h^1$ is the
Lagrange interpolant of $\phi$. Then $v_h = [\phi_h]^d \neq 0$ on $\mcT_h$ but
clearly $\| v_h\|_{\Gammah} = 0$.
Defining $q_h = \phi_h^2 \in \mcQ_h^2$ gives $\nabla q_h = 2 \phi_h \nabla \phi_h \neq 0$
but $\| \nabla q_h\|_{\Gammah} = 0$ in this particular case.
\end{remark}
Next, we state a simple surface-based discrete Poincar\'e estimate.
For a proof we refer to~\cite{BurmanHansboLarson2015}.
\begin{lemma}
  \label{lem:discrete-poincare-Gammah-stab}
  Let $v \in X_h$, then it holds
  \begin{align}
    \| v - \lambda_{\Gammah}(v)\|_{\Gamma} \lesssim \| \nablash v \|_{\Gammah}
    \label{eq:discrete-poincare-Gammah}
  \end{align}
  for $0 < h \leqslant h_0$ with $h_0$ chosen small enough.
\end{lemma}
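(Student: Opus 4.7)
The plan is to transfer a classical Poincaré inequality on the smooth surface $\Gamma$ back to the discrete surface $\Gamma_h$ via the closest-point lift, carefully accounting for the geometric perturbation and the mismatch between the continuous and discrete averages. Concretely, let $v^\ell$ denote the lift of the restriction $v|_{\Gamma_h}$ to $\Gamma$, so that $(v^\ell) \circ p = v$ on $\Gamma_h$.

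First, since $\Gamma$ is a smooth, compact, connected hypersurface without boundary, the standard Poincaré inequality yields
\begin{equation*}
  \|v^\ell - \lambda_\Gamma(v^\ell)\|_\Gamma \lesssim \|\nabla_\Gamma v^\ell\|_\Gamma.
\end{equation*}
Next, using the geometric assumptions~\eqref{eq:geometric-assumptions-II}, the surface Jacobian $J$ of $p:\Gamma_h \to \Gamma$ satisfies $\|1 - J\|_{L^\infty(\Gamma_h)} \lesssim h^{k_g+1}$, and the differential $d p$ differs from a tangential isometry by the same order. This gives the familiar norm equivalences $\|v\|_{\Gamma_h} \sim \|v^\ell\|_\Gamma$ and $\|\nabla_\Gamma v^\ell\|_\Gamma \lesssim \|\nabla_{\Gamma_h} v\|_{\Gamma_h}$ for $h \le h_0$.

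The key technical step is reconciling the two averages. Writing
\begin{equation*}
  \lambda_\Gamma(v^\ell) - \lambda_{\Gamma_h}(v)
  = \frac{1}{|\Gamma|}\int_\Gamma v^\ell - \frac{1}{|\Gamma_h|}\int_\Gamma v^\ell J^{-1},
\end{equation*}
the difference splits into a term proportional to $|\Gamma|^{-1} - |\Gamma_h|^{-1}$ and a term involving $J^{-1} - 1$. Both are $O(h^{k_g+1})$ by the geometric estimates, so Cauchy-Schwarz gives
\begin{equation*}
  \bigl|\lambda_\Gamma(v^\ell) - \lambda_{\Gamma_h}(v)\bigr| \lesssim h^{k_g+1} \|v^\ell\|_\Gamma.
\end{equation*}
Combining these ingredients, one estimates
\begin{equation*}
  \|v - \lambda_{\Gamma_h}(v)\|_\Gamma
  \le \|v^\ell - \lambda_\Gamma(v^\ell)\|_\Gamma + |\Gamma|^{1/2}\bigl|\lambda_\Gamma(v^\ell) - \lambda_{\Gamma_h}(v)\bigr|
  \lesssim \|\nabla_\Gamma v^\ell\|_\Gamma + h^{k_g+1} \|v^\ell\|_\Gamma,
\end{equation*}
and the final step is to absorb the low-order term on the left; for example, one applies the triangle inequality $\|v^\ell\|_\Gamma \le \|v^\ell - \lambda_\Gamma(v^\ell)\|_\Gamma + |\Gamma|^{1/2}|\lambda_\Gamma(v^\ell)|$, bounds the mean-free part by $\|\nabla_\Gamma v^\ell\|_\Gamma$, and uses either the mean-zero property of $v - \lambda_{\Gamma_h}(v)$ on $\Gamma_h$ together with the previous average estimate, or simply chooses $h_0$ small enough that the $h^{k_g+1}$-prefactor can be absorbed into the left-hand side.

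The main obstacle is the last step: the Poincaré inequality is sharp precisely because of the mean subtraction, so naively replacing $\lambda_\Gamma(v^\ell)$ by $\lambda_{\Gamma_h}(v)$ introduces a term $\|v^\ell\|_\Gamma$ that must be re-absorbed rather than simply dominated. The absorption works only because the geometric gap is of positive order $h^{k_g+1}$ and $k_g \ge 1$, so the bookkeeping of constants and the smallness condition on $h_0$ is where care is required; the remaining estimates are routine consequences of the closest-point-projection calculus already in use elsewhere in the paper.
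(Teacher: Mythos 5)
The paper offers no proof of this lemma itself but defers to \cite{BurmanHansboLarson2015}, and your argument --- lift $v$ to $\Gamma$, apply the continuous Poincar\'e inequality there, transfer norms via the bounds on $B$ and $|B|$, reconcile $\lambda_\Gamma(v^\ell)$ with $\lambda_{\Gamma_h}(v)$ through the $O(h^{k_g+1})$ Jacobian estimate, and absorb the remainder --- is essentially that standard proof and is correct. One caution on your final step: the bare ``choose $h_0$ small enough'' branch does not close on its own, since $\|v^\ell\|_\Gamma$ contains the uncontrolled constant $|\lambda_\Gamma(v^\ell)|$; you must first use translation invariance of both sides to normalize $\lambda_{\Gamma_h}(v)=0$ (your first option), after which $|\lambda_\Gamma(v^\ell)| = |\lambda_\Gamma(v^\ell)-\lambda_{\Gamma_h}(v)| \lesssim h^{k_g+1}\|v^\ell\|_\Gamma$ and the kick-back argument succeeds (alternatively, inserting $v^\ell-\lambda_\Gamma(v^\ell)$ into the average-difference integral, which is legitimate because both weights integrate to one, avoids absorption entirely).
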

Finally, the previous two lemma can be combined to
obtain the following discrete Poincar\'e inequality for the discrete ``energy'' norm $\tn V \tn_h$.
\begin{theorem}
  \label{thm:discrete-poincare-Nh-stab} For $(v,q) = V \in \VV_h$, the following estimate holds
  \begin{align}
    h^{-1}
    \left(
    \| v  \|^2_{\mcT_h}
    + \| q - \lambda_{\Gamma_h}(q) \|^2_{\mcT_h}
    \right)
    & \lesssim
      \tn V \tn_h
    \label{eq:discrete-poincare-Nh}
  \end{align}
  for $0<h \leq h_0$ with $h_0$ small enough.
\end{theorem}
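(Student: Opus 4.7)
The plan is to treat the two components of $V = (v,q)$ separately and then add the resulting estimates, using Lemma~\ref{lem:scaled-L2-norm-Nh-stab} to pass from the bulk $L^2$ norm on $\mcT_h$ to the surface $L^2$ norm on $\Gammah$ (plus the stabilization), and Lemma~\ref{lem:discrete-poincare-Gammah-stab} to handle the zero-mean pressure component.

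First, applying Lemma~\ref{lem:scaled-L2-norm-Nh-stab} componentwise to $v\in\mcV_h = [X_h^{k_u}]^d$ yields
\begin{equation*}
h^{-1}\|v\|_{\mcT_h}^2 \lesssim \|v\|_{\Gammah}^2 + s_h(v,v),
\end{equation*}
which is already controlled by $\tn V\tn_h^2$ through the velocity contribution in~\eqref{eq:triple-norm-def} and the velocity part of $|V|_{S_h}$.

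Next, for the pressure component I would apply Lemma~\ref{lem:scaled-L2-norm-Nh-stab} to the shifted function $q - \lambda_{\Gammah}(q) \in X_h$. Since the stabilization form $s_h$ depends on $q$ only through its gradient, the constant shift disappears, giving
\begin{equation*}
h^{-1}\|q - \lambda_{\Gammah}(q)\|_{\mcT_h}^2 \lesssim \|q - \lambda_{\Gammah}(q)\|_{\Gammah}^2 + s_h(q,q).
\end{equation*}
Now Lemma~\ref{lem:discrete-poincare-Gammah-stab} bounds $\|q-\lambda_{\Gammah}(q)\|_{\Gammah}^2$ by $\|\nablash q\|_{\Gammah}^2$, and since $\nablash q = \Psh \nabla q$ is the pointwise tangential projection of $\nabla q$ we have $\|\nablash q\|_{\Gammah}^2 \le \|\nabla q\|_{\Gammah}^2$. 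Combining, the pressure contribution is controlled by $\|\nabla q\|_{\Gammah}^2 + s_h(q,q) \lesssim \tn V\tn_h^2$.

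Adding the two estimates proves the claim. The only subtle point is the observation that subtracting the mean does not enlarge the stabilization term, which is essential because otherwise one could not absorb $s_h(q-\lambda_{\Gammah}(q),q-\lambda_{\Gammah}(q))$ directly into $|V|_{S_h}^2 = s_h(v,v) + s_h(q,q)$; everything else follows by straightforward chaining of the two lemmas stated above, provided $h_0$ is chosen small enough that both lemmas are simultaneously applicable.
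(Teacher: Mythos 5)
Your proof is correct and follows exactly the route the paper intends: the paper gives no explicit proof but introduces the theorem with the remark that it follows by combining Lemma~\ref{lem:scaled-L2-norm-Nh-stab} and Lemma~\ref{lem:discrete-poincare-Gammah-stab}, which is precisely your argument, including the (correct) observation that the mean-value shift drops out of the gradient-based stabilization. The only cosmetic remark is that the right-hand side of~\eqref{eq:discrete-poincare-Nh} should read $\tn V \tn_h^2$ to match the quadratic left-hand side, which is what your argument actually establishes.
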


\begin{figure}[htb!]
  \begin{subfigure}[t]{0.45\textwidth}
    \includegraphics[page=1,width=1\textwidth]{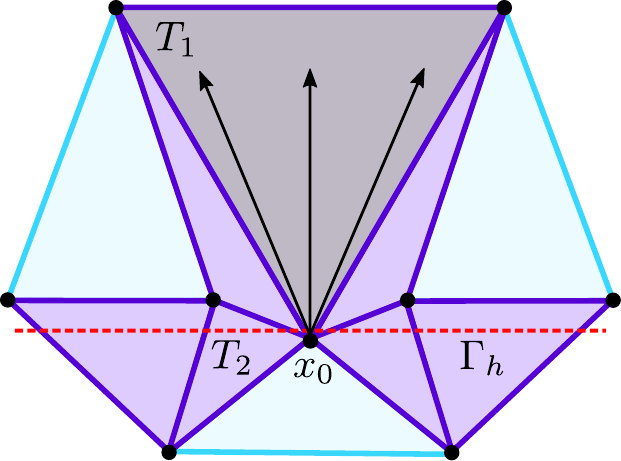}
  \end{subfigure}
  \hspace{0.04\textwidth}
    \begin{subfigure}[t]{0.45\textwidth}
    \includegraphics[page=1,width=1\textwidth]{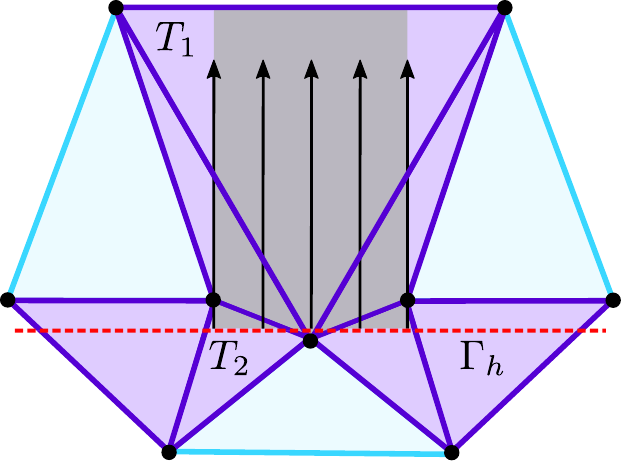}
    \end{subfigure}
    \caption{$L^2$ control mechanisms
for the full gradient and normal gradient stabilization.  (Left) While
element $T_1$ has only a small intersection with $\Gamma_h$, there are
several neighbor elements in $\mcT_h$ (purple) which share the node $x_0$ and
have a ``fat" intersection with $\Gamma_h$. The appearance of the full
gradient in stabilization $s_h^1$ allows to integrate along arbitrary
directions and thus gives raise to the control of $\| v \|_{T_1}$
through Lemma~\ref{lem:scaled-L2-norm-Nh-stab}.
(Right) The
fat intersection property for the discrete ``normal'' tube guarantees
that still a significant portion of $T_1$ can be reached when
integrating along normal-like paths which start from $\Gamma_h$ and
which reside completely inside $\mcT_h$.
    }
    \label{fig:condition_number-example}
\end{figure}

\subsection{Trace Estimates and Inverse Inequalities}
\label{ssec:trace-inverse-est}
First, we recall the following trace inequality for $v \in H^1(\mcT_h)$
\begin{equation}
  \| v \|_{\partial T} 
  \lesssim
  h^{-1/2} \|v \|_{T} +
  h^{1/2} \|\nabla v\|_{T}
  \quad \foralls T \in \mcT_h,
  \label{eq:trace-inequality}
\end{equation}
while for the intersection $\Gamma \cap T$ the corresponding inequality
\begin{align}
  \| v \|_{\Gamma \cap T} 
  \lesssim
  h^{-1/2} \| v \|_{T} 
  + h^{1/2} \|\nabla v\|_{T} 
  \quad \foralls T \in \mcT_h
  \label{eq:trace-inequality-cut-faces}
\end{align}
holds whenever $h$ is small enough,
see \cite{HansboHansboLarson2003} for a proof.
In the following, we will also need some
well-known inverse estimates for $v_h \in V_h$:
\begin{gather}
  \label{eq:inverse-estimate-grad}
  \| \nabla v_h\|_{T} 
  \lesssim
  h^{-1} 
  \| v_h\|_{T} \quad \foralls T \in \mcT_h,
  \\
  \| v_h\|_{\partial T} 
  \lesssim
  h^{-1/2} 
  \| v_h\|_{T},
  \qquad 
  \| \nabla v_h\|_{\partial T} 
  \lesssim
  h^{-1/2} 
  \| \nabla v_h\|_{T} \quad \foralls T \in \mcT_h,
  \label{eq:inverse-estimates-boundary}
\end{gather}
and the following ``cut versions'' 
when $K \cap T \not \subseteq \partial T$
\begin{alignat}{5}
  \|v_h \|_{K \cap T} 
  &\lesssim
  h^{-1/2} \|v_h\|_{T},
  & & \qquad 
  \| \nabla v_h \|_{K \cap T} 
  &\lesssim
  h^{-1/2} \|\nabla v_h\|_{T}
  & &\quad \foralls K \in \mcK_h, \;
  \foralls T \in \mcT_h,
  \label{eq:inverse-estimate-cut-v-on-K}
\end{alignat}
which are an immediate consequence of similar inverse estimates
presented in~\cite{HansboHansboLarson2003}.

\subsection{Geometric Estimates}
We now summarize some standard geometric identities and estimates
which typically are used in the numerical analysis of surface PDE
discretizations when passing from the discrete surface to the
continuous one and vice versa.  For a detailed derivation, we refer to
\cite{Dziuk1988,OlshanskiiReuskenGrande2009,Demlow2009,DziukElliott2013,BurmanHansboLarsonEtAl2016a}.
Starting with the Hessian of the signed distance function
\begin{align}
  \mcH = \nabla \otimes \nabla \rho \quad \text{on }
  U_{\delta_0}(\Gamma),
\end{align}
the derivative of the closest point projection 
and of an extended function $v^e$ is given by
\begin{gather}
Dp = \Ps (I - \rho \mcH) = \Ps - \rho \mcH,
\label{eq:derivative-closest-point-projection}
\\
  Dv^e = D(v \circ p) = Dv Dp = Dv P_{\Gamma}(I - \rho \mcH).
\label{eq:derivative-extended-function}
\end{gather}
The self-adjointness of $\Ps$, $\Psh$, and $\mcH$,
and the fact that $ \Ps \mcH = \mcH = \mcH \Ps$
and $\Ps^2 = \Ps$
then leads to the identities
\begin{align}
  \nabla v^e &= \Ps(I - \rho \mcH) \nabla v
  = \Ps(I - \rho \mcH) \nablas v,
  \label{eq:ve-full gradient}
  \\
  \nablash v^e &= \Psh(I - \rho \mcH)\Ps \nabla v = B^{T} \nablas v,
  \label{eq:ve-tangential-gradient}
\end{align}
where the invertible linear mapping
\begin{align}
  B = P_{\Gamma}(I - \rho \mcH) P_{\Gamma_h}: T_x(\Gammah) \to T_{p(x)}(\Gamma)
  \label{eq:B-def}
\end{align}
maps the tangential space of $\Gamma_h$ at $x$ to the tangential space of $\Gamma$ at
$p(x)$. Setting $v = w^l$ and using the identity $(w^l)^e = w$, we immediately get that
\begin{align}
  \nablas w^l = B^{-T} \nablash w
\end{align}
for any elementwise differentiable function $w$ on $\Gamma_h$ lifted to $\Gamma$.
We recall from \cite[Lemma 14.7]{GilbargTrudinger2001}
that for $x\in U_{\delta_0}(\Gamma)$, the Hessian $\mcH$
admits a representation
\begin{equation}\label{Hform}
  \mcH(x) = \sum_{i=1}^k \frac{\kappa_i^e}{1 + \rho(x)\kappa_i^e}a_i^e \otimes a_i^e,
\end{equation}
where $\kappa_i$ are the principal curvatures with corresponding
principal curvature vectors $a_i$.
Thus
\begin{equation}
  \|\mcH\|_{L^\infty(U_{\delta_0}(\Gamma))} \lesssim 1
  \label{eq:Hesse-bound}
\end{equation}
for $\delta_0 > 0$ small enough.
In the course of the a priori analysis in Section~\ref{sec:a-priori-est},
we will need to estimate various operator compositions involving
$B$, the continuous and discrete tangential and normal projection operators.
More precisely, 
using the definition
$\Qsh := I - \Psh = n_h \otimes n_h$,
the following bounds will be employed at several occasions.
\begin{lemma}
  \label{lem:composed-operator-bounds}
  \begin{alignat}{5}
    \| \Ps - \Ps \Psh \Ps \|_{L^\infty(\Gamma)} & \lesssim h^{2 k_g},
    \qquad & & 
    \| \Qsh \Ps\|_{L^\infty(\Gamma)} &\lesssim h^{k_g},
    \qquad & & 
    \|\Ps \Qsh \|_{L^\infty(\Gammah)} &\lesssim h^{k_g},
    \label{eq:projector-bounds}
    \\
    \| B \|_{L^\infty(\Gamma_h)} &\lesssim 1,
    \qquad & & 
    \| B^{-1} \|_{L^\infty(\Gamma)} & \lesssim 1,
    \qquad & & 
    \| P_\Gamma - B B^T \|_{L^\infty(\Gamma)} &\lesssim h^{k_g+1}.
    \label{eq:BBTbound}
  \end{alignat}
\end{lemma}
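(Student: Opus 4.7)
The plan is to reduce every claim to the elementary observation that, after transporting $n_h$ from $\Gamma_h$ to $\Gamma$ via the bijective closest-point projection, we have
\[
|\Ps n_h|^2 = 1 - (n\cdot n_h)^2 = \tfrac{1}{2}|n - n_h|^2\bigl(1 + n\cdot n_h\bigr) \lesssim h^{2k_g},
\]
which is a direct consequence of the geometric assumption~\eqref{eq:geometric-assumptions-II}. The symmetric identity $|\Psh n|^2 \lesssim h^{2k_g}$ is proved in the same way on $\Gamma_h$.

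For the three projector bounds in~\eqref{eq:projector-bounds}, I would simply expand using $\Psh = I - n_h\otimes n_h$ and the symmetry of $\Ps$. This gives the factorization
\[
  \Ps\Psh\Ps = \Ps - (\Ps n_h)\otimes(\Ps n_h), \qquad
  \Qsh\Ps = n_h \otimes (\Ps n_h), \qquad
  \Ps\Qsh = (\Ps n_h)\otimes n_h,
\]
so that the first bound follows from $\|\Ps n_h\|_{L^\infty(\Gamma)}^2 \lesssim h^{2k_g}$, while the latter two follow from $|n_h|=1$ and $\|\Ps n_h\|_{L^\infty} \lesssim h^{k_g}$ (and the analogous estimate with $n$ and $\Psh$ interchanged for the third bound evaluated on $\Gamma_h$).

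For the $B$-bounds in~\eqref{eq:BBTbound}, the upper bound on $B$ is immediate from $B = \Ps(I-\rho\mcH)\Psh$ combined with $\|\Ps\|, \|\Psh\| \leq 1$, the Hessian estimate~\eqref{eq:Hesse-bound}, and $\|\rho\|_{L^\infty(\Gammah)} \lesssim h^{k_g+1}$. To obtain the estimate $\|\Ps - BB^T\|_{L^\infty(\Gamma)} \lesssim h^{k_g+1}$, I would expand
\[
  BB^T = \Ps(I-\rho\mcH)\Psh(I-\rho\mcH)\Ps
\]
and use $\Ps\mcH = \mcH\Ps = \mcH$ together with $\|\rho\mcH\|_{L^\infty} \lesssim h^{k_g+1}$ to collect error terms, arriving at $BB^T = \Ps\Psh\Ps + R$ with $\|R\|_{L^\infty(\Gamma)} \lesssim h^{k_g+1}$. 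The first projector bound then delivers $\|\Ps - BB^T\| \lesssim h^{2k_g} + h^{k_g+1} \lesssim h^{k_g+1}$ since $k_g \geq 1$.

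The main obstacle is the $\|B^{-1}\|_{L^\infty(\Gamma)} \lesssim 1$ bound, because $B$ is a map between two different tangent spaces and so invertibility must be argued carefully. Here I would exploit the previous estimate: viewed as a linear operator on $T_{p(x)}\Gamma$, $BB^T$ agrees with the identity $\Ps|_{T_{p(x)}\Gamma}$ up to an $O(h^{k_g+1})$ perturbation. Hence, for $h \leq h_0$ sufficiently small, $BB^T$ is uniformly coercive on $T\Gamma$ with smallest eigenvalue bounded below by, say, $1/2$. This yields a uniform lower bound on the singular values of $B$ and therefore the desired uniform upper bound on $B^{-1}$, completing the lemma.
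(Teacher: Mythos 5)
Your proof is correct and follows essentially the same route as the paper: both reduce all six bounds to the geometric assumptions $\|n^e - n_h\|_{L^\infty(\Gamma_h)} \lesssim h^{k_g}$ and $\|\rho\|_{L^\infty(\Gamma_h)} \lesssim h^{k_g+1}$ by elementary projection algebra, the only difference being that you use the clean rank-one factorizations $\Ps - \Ps\Psh\Ps = (\Ps n_h)\otimes(\Ps n_h)$ and $\Qsh\Ps = n_h\otimes(\Ps n_h)$ together with $|\Ps n_h|^2 = 1-(n\cdot n_h)^2$, where the paper instead expands $\Ps(\Ps-\Psh)^2\Ps$ and splits $\Qsh\Ps$ into $(1-(n_h,n)_{\RR^d})\,n_h\otimes n_h$ plus a remainder --- equivalent computations yielding the same rates. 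Your coercivity argument for $\|B^{-1}\|_{L^\infty(\Gamma)}\lesssim 1$, via the uniform lower bound on the eigenvalues of $BB^T$ on $T\Gamma$ for small $h$, is a valid and welcome elaboration of a step the paper asserts follows directly from the definition of $B$.
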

\begin{proof}
  All these estimate have been proved earlier, see
  \cite{Dziuk1988,DziukElliott2013,BurmanHansboLarsonEtAl2016c}
  and we only include a short proof for the reader's convenience.
  We start with the bounds summarized in~\eqref{eq:projector-bounds}.
  An easy calculation shows that
  $\Ps - \Ps \Psh \Ps = \Ps (\Ps - \Psh)^2 \Ps$ 
  from which the desired bound follows by observing that
  $
  \Ps - \Psh = 
  (n - n_h)  \otimes n
  +
  n_h \otimes (n - n_h)
  $ and thus 
  $ 
  \| (\Ps - \Psh)^2 \|_{L^{\infty}(\Gamma_h)} \lesssim
  \| n - n_h \|_{L^{\infty}(\Gamma_h)}^2 \lesssim h^{2 k_g}.
  $
  Next, observe that
  \begin{align}
  \| \Qsh \Ps \|_{L^{\infty}(\Gamma)}
  &= 
  \| n_h \otimes n_h - (n_h,n)_{\RR^d} n_h \otimes n \|_{L^{\infty}(\Gamma)}
  \\
  &= 
  \|(1 - (n_h,n)_{\RR^d}) n_h \otimes n_h \|_{L^{\infty}(\Gamma)} + \| (n_h,n)_{\RR^d} n_h
  \otimes (n_h - n) \|_{L^{\infty}(\Gamma)}
  \\
  &\lesssim h^{2 k_g} + h^{k_g}.
  \end{align}
  Turning to~\eqref{eq:BBTbound},
  the first two bounds follow directly from~\eqref{eq:B-def} and
  ~\eqref{eq:Hesse-bound} together with the assumption
  $\|\rho\|_{L^{\infty}(\Gamma_h)} \lesssim h^{k_g +1}$.
  Finally, unwinding the definition of $B$, we find that
  $
  \Ps - B B^T = \Ps - \Ps \Psh \Ps + O(h^{k_g+1}),
  $
  which together with the previously derived estimate for $\Ps - \Ps \Psh \Ps$
  gives the stated operator bound.
\end{proof}
The previous lemma allows us to
quantify the error introduced by using the
full gradient in~\eqref{eq:Ah-def} instead of $\nablash$.
To do so we decompose the full gradient as
$\nabla = \nablash + \Qsh \nabla$ 
with $\Qsh = I - \Psh = n_h \otimes n_h$.
We then have
\begin{corollary}
  \label{cor:normal-grad-est}
  For $v \in H^1(\Gamma)$  and $w \in V_h$ it holds
  \begin{align}
    \| \Qsh \nabla v^e \|_{\Gamma_h}
    \lesssim h^{k_g}
    \| \nablas v \|_{\Gamma},
\qquad
    \| \Ps \Qsh \nabla w \|_{\Gamma_h}
    \lesssim h^{k_g}
    \| \nabla w \|_{\Gamma_h}.
    \label{eq:normal-grad-est}
  \end{align}
\end{corollary}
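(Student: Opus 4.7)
The plan is to derive both estimates as nearly immediate consequences of Lemma~\ref{lem:composed-operator-bounds} once the extension identity~\eqref{eq:ve-full gradient} is used to rewrite $\nabla v^e$. Throughout, I will rely on the pointwise symmetry $(\Qsh\Ps)^T = \Ps\Qsh$ (both are products of symmetric projections), which gives $\|\Qsh \Ps\|_{L^\infty(\Gammah)} = \|\Ps \Qsh\|_{L^\infty(\Gammah)} \lesssim h^{k_g}$, and on $\|I-\rho\mcH\|_{L^\infty(\Gammah)} \lesssim 1$, which follows from~\eqref{eq:Hesse-bound} combined with the surface resolution assumption $\|\rho\|_{L^\infty(\Gammah)} \lesssim h^{k_g+1}$.

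For the first estimate, I start from the extension formula~\eqref{eq:ve-full gradient}, which gives pointwise on $\Gammah$
\begin{equation*}
\Qsh \nabla v^e = \Qsh \Ps (I - \rho \mcH)\, \nablas v,
\end{equation*}
where $\nablas v$ is understood via its constant-normal extension. Taking $L^2(\Gammah)$ norms and pulling out the operator bounds yields
\begin{equation*}
\|\Qsh \nabla v^e\|_{\Gammah} \leq \|\Qsh \Ps\|_{L^\infty(\Gammah)}\, \|I - \rho \mcH\|_{L^\infty(\Gammah)}\, \|(\nablas v)^e\|_{\Gammah} \lesssim h^{k_g}\, \|(\nablas v)^e\|_{\Gammah}.
\end{equation*}
A change of variables from $\Gammah$ to $\Gamma$ via the closest point map (whose Jacobian is uniformly bounded above and below for $h \leq h_0$, by the bounds on $B$ in Lemma~\ref{lem:composed-operator-bounds}) then gives $\|(\nablas v)^e\|_{\Gammah} \lesssim \|\nablas v\|_{\Gamma}$, completing the first bound.

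For the second estimate, the argument is even more direct: since $\nabla w$ is a genuine $\RR^d$ gradient (no extension identity is needed), Lemma~\ref{lem:composed-operator-bounds} applied pointwise on $\Gammah$ gives
\begin{equation*}
\|\Ps \Qsh \nabla w\|_{\Gammah} \leq \|\Ps \Qsh\|_{L^\infty(\Gammah)}\, \|\nabla w\|_{\Gammah} \lesssim h^{k_g}\, \|\nabla w\|_{\Gammah}.
\end{equation*}

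I do not anticipate any real obstacle here. The only subtle point is bookkeeping about where each projector is evaluated: $\Ps$ is naturally defined on $\Gamma$ and must be understood on $\Gammah$ via the smooth extension of $n$ through the tubular neighborhood, while $\Psh, \Qsh$ live natively on $\Gammah$. Once this identification is fixed, both estimates reduce to the operator bounds already proved in Lemma~\ref{lem:composed-operator-bounds}.
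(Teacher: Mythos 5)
Your proposal is correct and follows essentially the same route as the paper: the first bound comes from the extension identity~\eqref{eq:ve-full gradient} combined with the operator bound $\|\Qsh\Ps\|_{L^\infty}\lesssim h^{k_g}$, and the second is read off directly from the bound on $\Ps\Qsh$ in Lemma~\ref{lem:composed-operator-bounds}. The only difference is that you spell out the change of variables back to $\Gamma$ (via the norm equivalence~\eqref{eq:norm-equivalences-ve}), which the paper leaves implicit.
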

\begin{proof}
  Since $\| \Qsh \Ps \|_{L^{\infty}(\Gamma_h)} \lesssim h^{k_g}$,
  the first estimate follows directly from the
  identity
  $\nabla v^e$ = $ \Ps(I - \rho \mcH)\nablas v$
  from~\eqref{eq:ve-full gradient}, while
    the second estimate is a immediate consequence of~\eqref{eq:projector-bounds}.
\end{proof}
Next, 
for a subset $\omega\subset \Gammah$,
we have the change of variables formula
\begin{equation}
\int_{\omega^l} g^l d\Gamma 
=  \int_{\omega} g |B|d\Gamma_h
\end{equation}
with $|B|$ denoting the absolute value of the determinant 
of $B$.  The determinant $|B|$ satisfies the following estimates.
\begin{lemma}  It holds
  \label{lem:detBbounds}
  \begin{alignat}{5}
  \| 1 - |B| \|_{L^\infty(\mcK_h)} 
  &\lesssim h^{k_g+1}, 
  & &\qquad
  \||B|\|_{L^\infty(\mcK_h)} 
  &\lesssim 1, 
  & &\qquad
  \||B|^{-1}\|_{L^\infty(\mcK_h)} 
  &\lesssim 1.
    \label{eq:detBbound}
\end{alignat}
\end{lemma}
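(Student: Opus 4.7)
The plan is to establish the first bound $\|1-|B|\|_{L^\infty(\mcK_h)} \lesssim h^{k_g+1}$ first, and then deduce the remaining two estimates from it for $h \leq h_0$ sufficiently small.

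For the principal bound, I would exploit that $B: T_x(\Gamma_h) \to T_{p(x)}(\Gamma)$ is a linear map between $(d-1)$-dimensional tangent spaces (these have the same dimension since both $\Gamma$ and $\Gamma_h$ are hypersurfaces), so $|B|$ is the $(d-1)$-dimensional area scaling factor appearing in the change-of-variables formula. The clean way to access it is via the Gram-type identity
\begin{equation*}
  |B|^2 = \det\bigl( BB^T \big|_{T_{p(x)}\Gamma} \bigr),
\end{equation*}
where $BB^T$ is viewed as a self-adjoint endomorphism of $\mathbb{R}^d$ that annihilates $N_{p(x)}\Gamma$, so that restriction to the $(d-1)$-dimensional range of $\Ps$ is meaningful.

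Now I would feed in the operator bound $\|P_\Gamma - BB^T\|_{L^\infty(\Gamma)} \lesssim h^{k_g+1}$ from Lemma~\ref{lem:composed-operator-bounds}. This shows $BB^T|_{T_{p(x)}\Gamma} = I_{T_{p(x)}\Gamma} + R$ with $\|R\|_{L^\infty(\Gamma)} \lesssim h^{k_g+1}$. Since the determinant is a smooth function on a small neighborhood of the identity in $\RR^{(d-1)\times(d-1)}$ and equals one at the identity, this yields $|B|^2 = 1 + O(h^{k_g+1})$ uniformly on $\mcK_h$. Using $|1-|B|| = |1-|B|^2|/(1+|B|) \leq |1-|B|^2|$ (since $|B|\geq 0$) we conclude $\|1 - |B|\|_{L^\infty(\mcK_h)} \lesssim h^{k_g+1}$.

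The remaining two estimates then follow directly. For $h \leq h_0$ with $h_0$ small enough, the first bound gives $|B| \leq 1 + Ch^{k_g+1} \leq 2$ and $|B| \geq 1 - Ch^{k_g+1} \geq \tfrac{1}{2}$, establishing $\||B|\|_{L^\infty(\mcK_h)} \lesssim 1$ and $\||B|^{-1}\|_{L^\infty(\mcK_h)} \lesssim 1$. The main obstacle I anticipate is purely notational: being careful that $BB^T$ is interpreted as an endomorphism of the $(d-1)$-dimensional target tangent space (rather than as a map $\RR^d \to \RR^d$, where it would have a trivial normal kernel that would spoil the determinant computation), and verifying that this matches the Jacobian factor used in the change-of-variables formula stated just above the lemma. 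Once this identification is firmly in place, the argument reduces to smoothness of the determinant and the already-established perturbation estimate for $\Ps - BB^T$.
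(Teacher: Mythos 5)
Your argument is correct, and it is worth noting that the paper itself states this lemma without proof, implicitly deferring to the references cited at the start of the geometric-estimates subsection; those sources typically obtain the bound by writing the surface-measure ratio explicitly as $|B| = (n^e\cdot n_h)\prod_{i=1}^{d-1}(1-\rho\,\kappa_i^e)$ and estimating each factor from the assumptions \eqref{eq:geometric-assumptions-II}, where the $n^e\cdot n_h = 1 - \tfrac12|n^e-n_h|^2$ factor contributes $O(h^{2k_g})$ and the curvature factors contribute $O(h^{k_g+1})$. Your route is genuinely different and arguably cleaner within the logic of this paper: you identify $|B|^2$ with $\det\bigl(BB^T|_{T_{p(x)}\Gamma}\bigr)$ and feed in the operator bound $\|\Ps - BB^T\|_{L^\infty(\Gamma)}\lesssim h^{k_g+1}$ from \eqref{eq:BBTbound}, which is proved in Lemma~\ref{lem:composed-operator-bounds} without any reference to the determinant bounds, so there is no circularity. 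The two points that need care are exactly the ones you flag: the determinant must be taken of the restriction of $BB^T$ to the $(d-1)$-dimensional tangent space (on which $\Ps$ acts as the identity), and this restricted determinant must be recognized as the Gram/Jacobian factor in the change-of-variables formula, which holds since the differential of $p|_{\Gamma_h}$ on $T_x\Gamma_h$ is precisely $B$. The remaining steps --- $|\det(I+R)-1|\lesssim\|R\|$ for bounded $R$, the reduction $|1-|B||\le|1-|B|^2|$, and deducing the uniform upper and lower bounds on $|B|$ for $h\le h_0$ small --- are all sound.
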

\noindent Combining the various estimates for the norm and the determinant of $B$ shows
that for $m = 0,1$
\begin{alignat}{3}
  \| v \|_{H^{m}(\mcK_h^l)} &\sim \| v^e \|_{H^{m}(\mcK_h)}  
  & &\quad \text{for } v \in H^m(\mcK_h^l),
  \label{eq:norm-equivalences-ve}
  \\
  \| w^l \|_{H^{m}(\mcK_h^l)} &\sim \| w \|_{H^{m}(\mcK_h)}
  & &\quad \text{for } w \in V_h.
  \label{eq:norm-equivalences-wh}
\end{alignat}

Next, we observe that thanks to the coarea-formula (cf. \citet{EvansGariepy1992})
\begin{align*}
\int_{U_{\delta}} f(x) \,dx = \int_{-\delta}^{\delta} 
\left(\int_{\Gamma(r)} f(y,r) \, \mathrm{d} \Gamma_r(y)\right)\dr,
\end{align*}
the extension operator $v^e$ defines a bounded operator
$H^m(\Gamma) \ni v \mapsto v^e \in H^m(U_{\delta}(\Gamma))$
satisfying the stability estimate
\begin{align}
  \| v^e \|_{k,U_{\delta}(\Gamma)} \lesssim \delta^{1/2} \| v
  \|_{k,\Gamma}, \qquad 0 \leqslant k \leqslant m
\label{eq:stability-estimate-for-extension}
\end{align}
for $0 < \delta \leqslant \delta_0$, where the hidden constant depends only on the curvature of $\Gamma$.

\subsection{Interpolation Operator}
Next, we recall from~\cite{ErnGuermond2004} that for
$v \in H^{k+1}(N_h)$,
the Cl\'ement interpolant $\pi_h:L^2(\mcT_h) \rightarrow X_h^k$ satisfies the
local interpolation estimates
\begin{alignat}{3}
\| v - \pi_h v \|_{m,T} 
& \lesssim
  h^{k + 1 -m}| v |_{k+1,\omega(T)},
  & &\quad 0\leqslant m \leqslant k+1, \quad &\foralls T\in \mcT_h,
  \label{eq:interpest0}
\end{alignat}
where $\omega(T)$ consists of all elements sharing a
vertex with $T$. 
Now with the help of the extension operator $(\cdot)^e$,
an interpolation operator $\pi_h: L^2(\Gamma) \to X^k_h$ 
can be constructed by
setting $\pi_h v = \pi_h v^e$, where we took the liberty of using the same symbol.
The resulting interpolation operator satisfies the following error estimate.

\begin{lemma}
\label{lem:interpolenergy}
For $V = (v,p) \in [H^{k_u}(\Gamma)]^d \times H^{k_p+1}(\Gamma)$ and $k_u, k_p \geqslant 1$,
the interpolant defined by $\Pi_h V^e = (\pi_h v^e,
\pi_h q^e) \in \mcV_h^{k_u} \times \mcQ_h^{k_p}$ satisfies
the interpolation estimate
\begin{align}
  \label{eq:interpolenergy}
  \tn V^e - \Pi_h V^e \tn_h
  \lesssim
  h^{k_u} \| v \|_{k_u, \Gamma}
  + h^{k_p} \| q \|_{k_p+1, \Gamma}.
\end{align}
\end{lemma}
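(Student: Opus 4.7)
The plan is to decompose the triple norm according to its definition
\[
\tn V^e - \Pi_h V^e\tn_h^2 = \|v^e - \pi_h v^e\|_{\Gammah}^2 + \|\nabla(q^e - \pi_h q^e)\|_{\Gammah}^2 + |V^e - \Pi_h V^e|_{S_h}^2,
\]
and bound each of the three contributions separately by a combination of the local Clément estimates~\eqref{eq:interpest0}, the cut trace inequality~\eqref{eq:trace-inequality-cut-faces}, and the stability estimate~\eqref{eq:stability-estimate-for-extension} for the extension operator.

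For the two surface terms, I would first apply~\eqref{eq:trace-inequality-cut-faces} elementwise to write, for the velocity part,
\[
\|v^e - \pi_h v^e\|_{\Gammah\cap T}^2 \lesssim h^{-1}\|v^e - \pi_h v^e\|_T^2 + h\|\nabla(v^e - \pi_h v^e)\|_T^2,
\]
and then invoke~\eqref{eq:interpest0} with $k+1 = k_u$ and $m=0,1$ to bound the right-hand side by $h^{2k_u - 1}|v^e|_{k_u,\omega(T)}^2$. Summing over all $T\in\mcT_h$ and exploiting the bounded overlap of the patches $\omega(T)$ yields $\|v^e - \pi_h v^e\|_{\Gammah}^2 \lesssim h^{2k_u-1}|v^e|_{k_u,N_h}^2$. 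Since $N_h$ is contained in a tubular neighborhood of width $\mcO(h)$ around $\Gamma$, the stability estimate~\eqref{eq:stability-estimate-for-extension} with $\delta \sim h$ produces the extra factor $h^{1/2}$ needed to convert $|v^e|_{k_u,N_h}$ into $h^{1/2}|v|_{k_u,\Gamma}$, giving the desired bound $h^{k_u}\|v\|_{k_u,\Gamma}$. The pressure gradient term is treated identically by applying~\eqref{eq:trace-inequality-cut-faces} to $\nabla(q^e - \pi_h q^e)$ and then~\eqref{eq:interpest0} with $k+1 = k_p + 1$ and $m = 1,2$, which ultimately yields $\|\nabla(q^e - \pi_h q^e)\|_{\Gammah}\lesssim h^{k_p}\|q\|_{k_p+1,\Gamma}$.

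For the stabilization semi-norm, note that both $s_h^1$ and $s_h^2$ are bounded by $h\|\nabla\cdot\|_{\mcT_h}^2$, since $|n_h\cdot\nabla w|\le|\nabla w|$ pointwise. Applying~\eqref{eq:interpest0} directly on $\mcT_h$ together with~\eqref{eq:stability-estimate-for-extension} gives
\[
|v^e - \pi_h v^e|_{S_h}^2 \lesssim h\cdot h^{2(k_u-1)}|v^e|_{k_u,N_h}^2 \lesssim h^{2k_u}|v|_{k_u,\Gamma}^2,
\]
and analogously $|q^e - \pi_h q^e|_{S_h}^2\lesssim h^{2(k_p+1)}|q|_{k_p+1,\Gamma}^2$, which is even one order sharper than needed. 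Combining the three contributions yields the asserted estimate.

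The whole argument is essentially mechanical; the only delicate point is keeping track of the $h$-powers produced by the three distinct ingredients (cut trace loses an $h^{-1/2}$, interpolation gains $h^{k+1-m}$, and the extension stability gains a $h^{1/2}$ from $\delta\sim h$). The fact that these precisely combine to yield $h^{k_u}$ and $h^{k_p}$ respectively is the only substantive check and is the main thing to verify carefully in the write-up.
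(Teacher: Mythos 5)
Your proof is correct and follows essentially the same route as the paper: the same decomposition of $\tn\cdot\tn_h$, the same combination of the cut trace inequality~\eqref{eq:trace-inequality-cut-faces}, the Cl\'ement estimate~\eqref{eq:interpest0}, and the extension stability~\eqref{eq:stability-estimate-for-extension} with $\delta\sim h$ for the two surface terms, and the same reduction of $S_h^2$ to $S_h^1$ (via $|n_h\cdot\nabla w|\leqslant|\nabla w|$) for the stabilization semi-norm. Your power counting matches the paper's in every term, so nothing further is needed.
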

\begin{proof}
Choosing $\delta_0 \sim h$,
it follows directly from combining the trace inequality~\eqref{eq:trace-inequality-cut-faces}, the interpolation
estimate~\eqref{eq:interpest0}, and the stability
estimate~\eqref{eq:stability-estimate-for-extension} that
the first two terms in the definition of 
  $
  \tn V^e - \Pi_h V^e \tn_h^2
  = \| v^e - \pi_h v^e \|_{\Gamma}^2
  + \| \nabla(p^e - \pi_h p^e) \|_{\Gamma}^2
  + | V^e - \Pi_h V^e |_{S_h}^2
  $
  satisfies the desired estimate.
  Since $|\cdot|_{S_h^2} \leqslant |\cdot|_{S_h^1}$ it is enough
  to focus on the full gradient stabilization $S_h = S_h^1$
  for the remaining part. With the same chain of estimates we find that
  \begin{align}
    | V^e - \Pi_h V^e |_{S_h}^2
    &= h \bigl(
      \| \nabla(v^e - \pi_h v^e) \|_{\mcT_h}^2
      +
  \| \nabla(p^e - \pi_h p^e) \|_{\mcT_h}^2
      \bigr)
    \\
    &
    \lesssim
    h^{2k_u-1}
    \| v^e \|_{k_u, \mcT_h}^2
    +
    h^{2k_p+1}
      \| q^e \|_{k_p+1, \mcT_h}^2
    \\
    &
    \lesssim
    h^{2k_u}
    \| v \|_{k_u, \Gamma}^2
    +
    h^{2k_p+2}
    \| q \|_{k_p+1, \Gamma}^2
  \end{align}
  which concludes the proof.
\end{proof}

\section{A Priori Error Estimates}
\label{sec:a-priori-est}
We now state and prove the main a priori error estimates for the stabilized
cut finite element method~(\ref{eq:darcy-probl-cutfem}).
The proofs rest upon a Strang-type lemma splitting the total error
into an interpolation error, a consistency error arising from
the additional stabilization term $S_h$ and finally, 
a geometric error caused by the discretization of the surface.
We start with establishing suitable estimates for the consistency and quadrature error
before we present the final a priori error estimates
at the end of this section.

\subsection{Estimates for the Quadrature and Consistency Error}
\label{ssec:quadr-error-estim}
The purpose of the next lemma is two-fold. First, it shows that
the full gradient stabilization will not affect the expected convergence order
when low-order elements are used. Second, it demonstrates that
only the normal gradient stabilization is suitable for high order discretizations
where the geometric approximation order $k_g$ needs to satisfy $k_g > 1$.
\begin{lemma}
  Let  $U = (u,p) \in [H^1(\Gamma)]^d_t \times  H^1(\Gamma)$. Then it holds
  \label{lem:Sh-const-est}
  \begin{align}
    |U^e|_{S_h^1} &\lesssim h (
                    \| \nablas u \|_{\Gamma}
                    +
                    \| \nablas p \|_{\Gamma}
                    ),
                  \\
    |U^e|_{S_h^2} & \lesssim
                    h^{k_g + 1}
                    (
                    \| \nablas u \|_{\Gamma}
                    +
                    \| \nablas p \|_{\Gamma}
                    ).
    \label{eq:Sh1-const-est}
  \end{align}
\end{lemma}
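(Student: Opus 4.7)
The plan is to expand $|U^e|_{S_h^i}^2 = h \bigl(\|\ast_i \nabla u^e\|_{\mcT_h}^2 + \|\ast_i \nabla p^e\|_{\mcT_h}^2\bigr)$, where $\ast_1$ stands for the identity and $\ast_2$ for the contraction with $n_h$, and then to bound each term by lifting the integrand from the tubular neighbourhood $\mcT_h$ back to $\Gamma$ via the extension operator. The main ingredients will be the identity $\nabla v^e = \Ps(I-\rho\mcH)\nablas v$ from~\eqref{eq:ve-full gradient}, the coarea-based stability estimate~\eqref{eq:stability-estimate-for-extension}, and the geometric assumption~\eqref{eq:geometric-assumptions-II}.

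For the full gradient stabilization $S_h^1$, I would argue as follows. From \eqref{eq:ve-full gradient} together with the Hessian bound~\eqref{eq:Hesse-bound} one obtains the pointwise estimate $|\nabla v^e(x)| \lesssim |\nablas v(p(x))|$ in $U_{\delta_0}(\Gamma)$. Since $\mcT_h \subset U_{\delta}(\Gamma)$ for some $\delta \lesssim h$, the stability estimate~\eqref{eq:stability-estimate-for-extension} yields $\|\nabla v^e\|_{\mcT_h}^2 \lesssim h \|\nablas v\|_\Gamma^2$ for each component of $u^e$ and for $p^e$. Multiplying by the extra factor $h$ in the definition of $s_h^1$ and taking square roots gives $|U^e|_{S_h^1} \lesssim h(\|\nablas u\|_\Gamma + \|\nablas p\|_\Gamma)$.

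The gain of $h^{k_g}$ for the normal gradient stabilization comes from the observation that $\nabla v^e(x)$ lies in the tangent plane at $p(x)$, i.e. $n^e(x) \cdot \nabla v^e(x) = 0$ pointwise in $U_{\delta_0}(\Gamma)$. Writing
\begin{equation*}
n_h \cdot \nabla v^e = (n_h - n^e)\cdot \nabla v^e
\end{equation*}
and invoking the normal approximation bound $\|n^e - n_h\|_{L^\infty(\mcT_h)} \lesssim h^{k_g}$ (the standard extension of~\eqref{eq:geometric-assumptions-II} from $\Gamma_h$ to the narrow band $\mcT_h$), I pick up a factor $h^{k_g}$ pointwise. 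Combining this with the same coarea-type bound $\|\nabla v^e\|_{\mcT_h}^2 \lesssim h \|\nablas v\|_\Gamma^2$ used above yields $\|n_h \cdot \nabla v^e\|_{\mcT_h}^2 \lesssim h^{2k_g+1}\|\nablas v\|_\Gamma^2$, and the prefactor $h$ from $s_h^2$ produces the claimed $h^{k_g+1}$ after square-rooting.

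The routine part is really the book-keeping of the extension norms; the main conceptual step is the cancellation $n^e \cdot \nabla v^e = 0$, which is the mechanism that makes the normal gradient stabilization weakly consistent to high order while the full gradient stabilization is only first-order consistent. A minor technical point worth stating explicitly is the extension of the bound $\|n^e - n_h\|_{L^\infty} \lesssim h^{k_g}$ from $\Gamma_h$ to $\mcT_h$; if $n_h$ is extended from $\Gamma_h$ to $\mcT_h$ in a piecewise constant fashion, this follows from the Lipschitz continuity of $n^e$ on $U_{\delta_0}(\Gamma)$ together with $\operatorname{diam}(T) \lesssim h$.
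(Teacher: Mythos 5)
Your proof follows essentially the same route as the paper's: for $S_h^1$ the coarea/stability estimate~\eqref{eq:stability-estimate-for-extension} with $\delta\sim h$, and for $S_h^2$ the cancellation coming from the tangentiality of $\nabla v^e$ combined with the normal approximation property. Your identity $n_h\cdot\nabla v^e=(n_h-n^e)\cdot\nabla v^e$ is exactly the paper's step $\Qsh\nabla p^e=(\Qsh-\Qs)\nabla p^e$, and your pointwise bound $|\nabla v^e|\lesssim|\nablas v\circ p|$ even delivers the seminorm on the right-hand side as stated in the lemma.

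One caveat on your final ``minor technical point'': justifying $\|n^e-n_h\|_{L^\infty(\mcT_h)}\lesssim h^{k_g}$ by extending $n_h$ piecewise constantly from $\Gammah$ and invoking the Lipschitz continuity of $n^e$ only yields $\|n^e-n_h\|_{L^\infty(T)}\lesssim h^{k_g}+h\sim h$, since moving a distance $\diam(T)\sim h$ away from $\Gammah$ costs a full factor $h$. For $k_g=1$ this is harmless, but for $k_g\geqslant 2$ --- which is precisely the regime where the improved $S_h^2$ estimate matters --- it degrades the bound to $|U^e|_{S_h^2}\lesssim h^2$ instead of $h^{k_g+1}$. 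The correct reading (implicit in the paper and in the definition of $s_h^2$, which integrates over $\mcT_h$ and therefore already presupposes $n_h$ on the whole active mesh) is that $n_h$ is a discrete normal field defined on $N_h$, e.g.\ $n_h=\nabla\phi_h/|\nabla\phi_h|$ for a level-set representation, for which the approximation property $\|n^e-n_h\|_{L^\infty(N_h)}\lesssim h^{k_g}$ is assumed to hold on the full neighbourhood, not derived from its trace on $\Gammah$.
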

\begin{proof}
  A simple application of stability estimate~\eqref{eq:stability-estimate-for-extension}
  with $\delta \sim h$ shows that for $S_h^1$,
  \begin{align}
    S_h^1(U^e, U^e) = 
    h \|\nabla u^e\|_{\mcT_h}^2
  + h \|\nabla p^e\|_{\mcT_h}^2
  \lesssim
  h^2 (\| u \|_{1,\Gamma}^2
  +
  \| p \|_{1,\Gamma}^2).
  \end{align}
  Turning to $S_h^2$,  the pressure part of the normal gradient stabilization
  can be estimated by
\begin{align}
  s_p(p^e, p^e)
  =
  h
  \| \Qsh \nabla p^e \|_{\mcT_h}^2 =
  h
  \| (\Qsh - \Qs) \nabla p^e \|_{\mcT_h}^2
  \lesssim
  h^{2k_g + 1} \| \nabla p^e \|_{\mcT_h}^2
  \lesssim
  h^{2 k_g + 2} \| \nablas p \|_{\Gamma}^2,
\end{align}
and similarly, $|u^e|_{s_h^2} \lesssim  h^{k_g + 1} \| \nablas u \|_{\Gamma}$
for $u \in [H^1(\Gamma)]^d$.
\end{proof}
\begin{lemma}
  Let $U = (u,p) \in [L^2(\Gamma)]_t^d \times H^1(\Gamma)/\RR $
  be the solution to weak problem~(\ref{eq:darcy-problem-weak-tang}) and assume that
   $V \in \VV_h$.
  Then
  \begin{align}
    | L(V^l) - L_h(V)|
    + |A(U, V^l) - A_h(U^e, V_h)|
    &\lesssim h^{k_g} (\|f\|_{\Gamma} + \|g\|_{\Gamma}) \tn V \tn_h.
    \label{eq:Lh-quad-est-primal}
  \end{align}
  Furthermore, if
  $\Phi = (\phi_u, \phi_p) \in [H^1(\Gamma)]^d_t \times H^2(\Gamma)/\RR$
  and $\Phi_h := \Pi_h \Phi = (\pi_h \phi_u, \pi_h \phi_p)$, we
  have the improved estimate
  \begin{align}
    | L(\Phi_h^l) - L_h(\Phi_h)|
    + 
    |A(U, \Phi_h^l) - A_h(U^e, \Phi_h)|
    &\lesssim h^{k_g+1} (\|f\|_{\Gamma} + \|g\|_{\Gamma})
     ( \| \phi_u \|_{1,\Gamma} + \| \phi_p \|_{2,\Gamma}).
    \label{eq:Lh-quad-est-dual}
  \end{align}
\end{lemma}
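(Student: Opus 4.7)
The plan is to expand the differences $L(V^l) - L_h(V)$ and $A(U, V^l) - A_h(U^e, V)$ term by term, and for each pairing push the $\Gamma$-integral onto $\Gamma_h$ via the change-of-variables formula $\int_\Gamma h\,d\Gamma = \int_{\Gamma_h} h^e |B|\,d\Gamma_h$ together with the pullback identity $\nablas w^l = B^{-T}\nablash w$. The resulting residuals split into a \emph{Jacobian defect} $(|B| - 1)$, which is pointwise $O(h^{k_g+1})$ by Lemma~\ref{lem:detBbounds}, and a \emph{gradient-projection defect} of the form $\Qsh \nabla p^e$ or $(I - B^T)(\nablas p)^e$, which is only $O(h^{k_g})$ via Lemma~\ref{lem:composed-operator-bounds} and Corollary~\ref{cor:normal-grad-est}.

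For the primal estimate~\eqref{eq:Lh-quad-est-primal} I would apply Cauchy--Schwarz on $\Gamma_h$ in each residual, absorb the test-function factors $\|v\|_{\Gamma_h}$ and $\|\nabla q\|_{\Gamma_h}$ into $\tn V \tn_h$ via~\eqref{eq:triple-norm-def}, and control $\|u\|_\Gamma + \|\nablas p\|_\Gamma$ by $\|f\|_\Gamma + \|g\|_\Gamma$ using the weak equations (from $u + \nablas p = g$ and the elliptic equation $\divs \nablas p = f - \divs g$). The dominant contribution scales like $h^{k_g}$, coming from the $\Qsh$-defect paired against a generic test function, which is precisely the required rate.

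For the sharpened dual bound~\eqref{eq:Lh-quad-est-dual} I would exploit two additional facts. First, since $u$, $g$, and $\phi_u$ are tangential on $\Gamma$, extensions such as $u^e \cdot n_h = u \cdot (n_h - n^e)$ and the analogous quantities for $g^e$ and $\phi_u^e$ are pointwise $O(h^{k_g})$ by~\eqref{eq:geometric-assumptions-II}. Second, for $\Phi_h = \Pi_h \Phi^e$ with smooth $\Phi$, combining the cut trace inequality~\eqref{eq:trace-inequality-cut-faces} with the Cl\'ement estimate~\eqref{eq:interpest0} and the extension stability~\eqref{eq:stability-estimate-for-extension} yields $\|\pi_h \phi_u^e - \phi_u^e\|_{\Gamma_h} \lesssim h\|\phi_u\|_{1,\Gamma}$ and $\|\nabla(\pi_h \phi_p^e - \phi_p^e)\|_{\Gamma_h} \lesssim h\|\phi_p\|_{2,\Gamma}$. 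Every surviving $\Qsh$-defect can then be rewritten as a normal-normal pairing, for instance $(u^e \cdot n_h, n_h \cdot \nabla (\pi_h \phi_p^e))_{\Gamma_h}$; the first factor is $O(h^{k_g})$ by tangentiality, and after splitting $n_h \cdot \nabla(\pi_h \phi_p^e) = n_h \cdot \nabla \phi_p^e + n_h \cdot \nabla(\pi_h \phi_p^e - \phi_p^e)$ the second factor is $O(h)$ by Corollary~\ref{cor:normal-grad-est} (using $k_g \geqslant 1$) combined with the interpolation bound. The product is of order $h^{k_g+1}$, matching the already favourable size of the Jacobian defects.

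The hard part will be the sharp bookkeeping: the $H^1(\Gamma_h)$-interpolation estimate $\|\nabla(\pi_h \phi_p^e - \phi_p^e)\|_{\Gamma_h} \lesssim h\|\phi_p\|_{2,\Gamma}$ is tight only because the trace-inequality loss $h^{-1/2}$ is exactly compensated by the $h^{1/2}$ gain from the extension stability~\eqref{eq:stability-estimate-for-extension}; without this cancellation one would only reach $h^{k_g+1/2}$, which is insufficient. The remaining work is to check that every one of the residual pairings in the expansions of $L - L_h$ and $A - A_h$ falls into either the Jacobian or the paired-normal-defect pattern, which is routine but requires care in tracking whether each test-function factor is genuinely tangential on $\Gamma$ or must be controlled through the interpolation bound.
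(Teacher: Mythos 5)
Your plan follows essentially the same route as the paper's proof: a change of variables producing a Jacobian defect $1-|B|$ of order $h^{k_g+1}$, a tangential/normal splitting of the full gradients whose normal ($\Qsh$) defect is only of order $h^{k_g}$ and thus dominates the primal bound, and, for the dual bound, the combination of the tangentiality of the exact data with the interpolation error of $\Phi_h$ (with the $h^{-1/2}$ trace loss cancelled by the $h^{1/2}$ extension stability, exactly as you note) to upgrade every normal defect to $h^{k_g+1}$. The only detail left implicit is that the factor $\| q^l \|_{\Gamma}$ arising from the $(f,q)$ term must be passed to the energy norm via the discrete Poincar\'e inequality of Lemma~\ref{lem:discrete-poincare-Gammah-stab}, since the energy norm controls only $\| \nabla q \|_{\Gammah}$.
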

\begin{proof}
  We start with the term $L(\cdot) - L_h(\cdot)$. Unwinding the definition of the
  linear forms $L$ and $L_h$, we get
  \begin{align}
    L(V^l) - L_h(V)
    &=
      \Bigl(
      (f, q^l)_{\Gamma} - 
      (f^e, q)_{\Gamma_h}
      + \onehalf
      \left(
      (g, v^l)_{\Gamma}
      - (g^e, v)_{\Gamma_h}
      \right)
      \Bigr)
      \\
      &\quad + \onehalf
      \left(
      (g, \nabla q^l)_{\Gamma}
      - (g^e, \nabla q)_{\Gamma_h}
        \right)
        = I + II.
  \end{align}
  For the first term, a change of variables together with estimate~\eqref{eq:detBbound}
  for the determinant $|B|$ yields
  \begin{align}
    I &=  (f, (1 - |B|^{-1}) q^l)_{\Gamma} + \onehalf (g, (1 - |B|^{-1}) v^l)_{\Gamma}
        \\
    &\lesssim h^{k_g +1} \left(
      \| f \|_{\Gamma}
      + \|g\|_{\Gamma}
      \right)
      (\| q^l \|_{\Gamma} + \| v^l \|_{\Gamma})
      \\
    &\lesssim h^{k_g + 1} \left(
      \| f \|_{\Gamma}
      + \|g\|_{\Gamma}
      \right)
      \tn V \tn_h,
  \end{align}
  where in the last step, we used the norm equivalences~(\ref{eq:norm-equivalences-wh})
  and the discrete Poincar\'e inequality~(\ref{eq:discrete-poincare-Gammah})
  to pass to $\tn V_h \tn_h$.
  To estimate $II$, we split $\nabla q$ into its tangential and normal part
  \begin{align}
    \nabla q = \nablash q + \Qsh \nabla q.
  \end{align}
  Note that for the tangential field $g$,
  the identities 
  \begin{align}
    (g, \nabla q^l)_{\Gamma} = (g, \nablas q^l)_{\Gamma},
    \quad
    (g^e, \nabla q)_{\Gamma_h} = (\Ps g^e, \nabla q)_{\Gammah}
    \label{eq:tangential-g-identities}
  \end{align}
  hold and thus using  $\Ps g = g$ once more and the fact that $\Ps^T  = \Ps$ allows us to rewrite
  $II$ as
  \begin{align}
    2 II &= (g, \nablas q^l)_{\Gamma} - (g^e, \nablash q)_{\Gamma_h} - (g^e, \Qsh \nabla q)_{\Gamma_h}
           \\
    &= (g, (\Ps - |B|^{-1}B^T) \nablas q^l)_{\Gamma} - (\Ps g^e,\Qsh \nabla q)_{\Gammah}
         \\
       &= (g, \Ps (\Ps - |B|^{-1}B^T) \nablas q^l)_{\Gamma} + (g^e,\Ps\Qsh \nabla q)_{\Gammah}
         \\
       &= II_t + II_n.
  \end{align}
  Unwinding the definition of $B$ given in~(\ref{eq:B-def}) together with the
  estimates for the determinant $|B|$ from Lemma~\ref{lem:detBbounds}
  reveals that
  \begin{align}
    \Ps (\Ps - |B|^{-1}B^T)
    &=
    \Ps (\Ps - B^T) +  \Ps(|B|^{-1} - 1)B^T
   \\
    &\sim \Ps (\Ps - \Psh(I - \rho \mcH)\Ps) + h^{k_g + 1}
   \\&\sim \Ps - \Ps\Psh\Ps + h^{k_g + 1}.
  \end{align}
  Consequently, 
  using the bounds for $\Ps - \Ps \Psh \Ps$
  and $\Ps\Qsh$ from Lemma~\ref{lem:composed-operator-bounds}, we deduce that
  \begin{align}
    II_t & \lesssim h^{k_g + 1} \| g \|_{\Gamma} \| \nablas q^l \|_{\Gamma}
           \lesssim h^{k_g + 1} \| g \|_{\Gamma} \tn V \tn_h,
           \\
    II_n & \lesssim h^{k_g} \| g \|_{\Gamma} \| \nabla q \|_{\Gammah}
           \lesssim h^{k_g} \| g \|_{\Gamma} \tn V \tn_h.
  \end{align}
  In the special case $q = \pi_h \phi_p^e$, the bound for $II_n$ can be further improved to
  \begin{align}
    II_n &= (g^e,\Ps\Qsh \Ps \nabla \phi_p^e)_{\Gammah}
           + (g^e,\Ps\Qsh \nabla ( \pi_h \phi_p^e - \phi_p^e))_{\Gammah},
           \label{eq:Lhdual_IIn-1}
    \\
 &\lesssim
   h^{k_g + 1} \| g \|_{\Gamma} \| \nabla \phi_p \|_{\Gamma}
   + 
   h^{k_g + 1} \| g \|_{\Gamma} \| \phi_p \|_{2,\Gamma},
  \label{eq:Lhdual_IIn-2}
  \end{align}
  where we once more employed the identity $\nabla \phi_p^e = \Ps \nabla\phi_p^e$,
  the estimates~(\ref{eq:projector-bounds}) for the operators $\Ps \Qsh \Ps $ and $\Ps\Qsh$ and finally,
  the interpolation estimate~(\ref{eq:interpolenergy}).
  
  Turning to the term $A(U, \cdot) - A(U^e, \cdot)$
  in~(\ref{eq:Lh-quad-est-primal}) and~(\ref{eq:Lh-quad-est-dual}) and
  recalling the definition of bilinear forms $A$ and $A_h$, we rearrange terms
  to obtain
  \begin{align}
    2 \left(
    A(U, V^l) - A_h(U^e, V)
    \right)
    &=
      \left(
    (u, v^l)_{\Gamma} - (u^e, v)_{\Gamma_h}
    \right)
      +
      \left(
    (\nabla p, v^l)_{\Gamma}
      - (\nabla p^e, v)_{\Gammah}
      \right)
    \\
    &\quad
      -
      \left(
    (u, \nabla q^l)_{\Gamma}
      - (u^e, \nabla q)_{\Gammah}
     \right) 
      +
      \left(
    (\nabla p, \nabla q^l)_{\Gamma}
      -(\nabla p^e, \nabla q)_{\Gamma_h}
      \right)
    \\
    &= I + II - III + IV.
  \end{align}
  To estimate the term $I$--$IV$,
  we proceed along the same lines as in the previous part. As before, 
  the first term can be bounded as follows
  \begin{align}
    I &= (u, (1 - |B|^{-1}) v^l)_{\Gamma}
    \lesssim h^{k_g + 1} \| u \|_{\Gamma} \| v^l \|_{\Gamma}
    \lesssim h^{k_g + 1} \| u \|_{\Gamma} \tn V \tn_h.
  \end{align}
  For the remaining terms,
  the appearance of the full gradient 
  necessitates a similar split into its normal and tangential part
  as before, followed by a lifting of the tangential part
  and the use of the operator estimates~(\ref{eq:projector-bounds}) and~(\ref{eq:BBTbound}).
  Recall that $\nabla p = \nablas p^e$ and consequently,
  \begin{align}
    II &= (\nablas p, v^l)_{\Gamma} - (\nablash p^e, v)_{\Gammah} - (\Qsh \nabla p^e, v)_{\Gammah}
         \\
       &=  ((\Ps - |B|^{-1}B^T) \nablas p, v^l)_{\Gamma} - (\Qsh \Ps \nabla p^e, v)_{\Gammah}
    \\
    &= II_t + II_n.
  \end{align}
  Now expand $B$ to see that
  $\Ps - |B|^{-1}B^T \sim \Ps - \Psh \Ps + h^{k_g+1}  \sim \Qsh \Ps + h^{k_g + 1}$
  and apply the
  operator bounds from Lemma~\ref{lem:composed-operator-bounds}
  to $\Qsh \Ps$,
  followed by the norm equivalences~(\ref{eq:norm-equivalences-wh})
  to arrive at the following estimates
  \begin{align}
    II_t &\lesssim |(\Qsh \Ps \nablas p, v^l)_{\Gamma}| + h^{k_g + 1} \| \nablas p \|_{\Gamma} \tn V \tn_h
           \lesssim (h^{k_g} + h^{k_g + 1}) \| \nablas p \|_{\Gamma} \tn V \tn_h,
           \label{eq:Ah-dual-IIt}
           \\
    II_n & \lesssim h^{k_g} \| \nablas p \|_{\Gamma} \tn V \tn_h.
  \end{align}
  In the special case $v = \pi_h \phi_u$, exploiting that $\phi_u$ is
  a $H^1$ regular, tangential field and applying the proper operator and
  interpolation estimates,
  the bounds for $II_n$ can be improved to
  \begin{align}
    II_n &= (\Qsh \Ps \nablas p^e, \pi_h \phi_u^e)_{\Gammah}
    \\
    &= ( \Ps \Qsh \Ps \nablas p^e, \phi_u^e)_{\Gammah} +
    (\Qsh \Ps \nablas p^e, \pi_h \phi_u^e - \phi_u^e)_{\Gammah}
    \\
         &\lesssim h^{k_g + 1} \| \nablas p \|_{\Gamma} \|\phi_u \|_{\Gamma}
           + h^{k_g} \| \nablas p \|_{\Gamma} \|\pi_h \phi_u^e - \phi_u^e\|_{\Gamma}
           \lesssim
         h^{k_g + 1} \| \nablas p \|_{\Gamma} \|\phi_u \|_{1,\Gamma},
  \end{align}
  and similarly for $II_t$, the improvement of
  first term in~\eqref{eq:Ah-dual-IIt} gives 
  \begin{align}
    II_t &\lesssim h^{k_g + 1} \| \nablas p \|_{\Gamma} \|\phi_u \|_{1,\Gamma}.
  \end{align}
  Turning to the third term, we rewrite $III$ as
  \begin{align}
    III &=  (\Ps u, \nablas q^l)_{\Gamma} - (\Ps u^e, \nablash q)_{\Gammah}
          -  (\Ps u_e, \Qsh \nabla q)_{\Gammah}
          \\
    &= (\Ps u, (\Ps - |B|^{-1}B^T) \nablas q^e)_{\Gammah}
      - (u^e, \Ps \Qsh \nabla q)_{\Gammah} = III_t + III_n.
  \end{align}
  Using $\Ps (\Ps - |B|^{-1}B^T) \sim \Ps - \Ps  \Psh \Ps + h^{k_g+1}$ and
  applying the operator bounds~(\ref{eq:projector-bounds}) yields
  \begin{align}
    III_t &\lesssim  h^{k_g+1} \| u \|_{\Gamma} \| \nablas q^l \|_{\Gamma}
            \lesssim h^{k_g+1} \| u \|_{\Gamma} \tn V \tn_h,
            \\
    III_n &\lesssim  h^{k_g} \| u^e \|_{\Gammah} \| \nabla q \|_{\Gammah}
            \lesssim h^{k_g} \| u \|_{\Gamma} \tn V \tn_h.
  \end{align}
  Following precisely steps~(\ref{eq:Lhdual_IIn-1})--(\ref{eq:Lhdual_IIn-2}),
  the term $III_n$ can be improved if $q = \pi_h \phi_p$, showing that
  \begin{align}
   III_t &\lesssim
           h^{k_g + 1} \| u \|_{\Gamma} \| \phi_p \|_{2,\Gamma}.
  \end{align}
  Finally, starting from the fact that $\nabla p = \nablas p$,
  similar steps lead the following bound for $IV$
  \begin{align}
    IV &= (\nablas p, \nablas q^l)_{\Gamma}
         - (\nablash p^e, \nablash q)_{\Gammah}
         - (\Qsh \nabla p^e, \Qsh \nabla q)_{\Gammah}
         \\
    &= ((\Ps - |B|^{-1}B B^T) \nablas p, \nabla q^l)_{\Gamma}
      - (\Qsh \nabla p^e, \Qsh \nabla q)_{\Gammah}
      = IV_t + IV_n,
      \\
    \intertext{and as before thanks to~(\ref{eq:BBTbound}), (\ref{eq:normal-grad-est})
    and interpolation estimate~(\ref{eq:interpolenergy}), we see that
    }
    IV_t &\lesssim h^{k_g +1} \|\nablas p \|_{\Gamma} \| \nablas q \|_{\Gammah}
           \lesssim h^{k_g +1} \|\nablas p \|_{\Gamma} \tn V \tn_h,
           \\
    IV_n &\lesssim h^{k_g} \|\nablas p \|_{\Gamma} \| \nabla q \|_{\Gammah}
           \lesssim h^{k_g} \|\nablas p \|_{\Gamma} \tn V \tn_h,
           \\
    IV_n &\lesssim h^{k_g + 1} \| \nablas p \|_{\Gamma} \| \phi_p \|_{2,\Gamma},
  \end{align}
  assuming $q = \pi_h \phi_p$ in the last case.
  Collecting the estimates for $I$--$IV$ and using the stability estimate
  $\tn U \tn \lesssim (\|f\|_{\Gamma} + \|g\|_{\Gamma})$
  concludes the proof.
\end{proof}

\subsection{A Priori Error Estimates}
\label{ssec:priori-error-estim}
We start with establishing an a priori estimate for the error
measured in the natural ``energy'' norm.
\begin{theorem}
  \label{thm:priori-error-estim}
  Let $U = (u,p)$ be the solution to the continuous problem~(\ref{eq:darcy-problem-cont}).
  Assume that $(u,p) \in [H^{k_u+1}(\Gamma)]^d_t \times H^{k_p+1}(\Gamma)$ and that the geometric
  assumptions~(\ref{eq:geometric-assumptions-II}) hold.
  Then for the full gradient stabilized form $B_h = A_h + S_h^1$, the
  solution $U_h = (u_h, p_h) \in \mcV_h^k \times \mcQ_h^l$
  to the discrete problem~(\ref{eq:darcy-probl-cutfem})
  satisfies  the a priori estimate
  \begin{align}
    \tn U^e - U_h \tn_h
    \lesssim
    h^{k_u +1} \| u\|_{k+1,\Gamma}
    + h^{k_p} \| p\|_{l+1,\Gamma}
    + h^{k_g} (\| f \|_{\Gamma} + \| g\|_{\Gamma})
    + h (\|u\|_{1,\Gamma} + \|p\|_{1,\Gamma}).
    \label{eq:aprior-est-energy-sh1}
  \end{align}
  If the normal gradient stabilization $S_h =  S_h^2$ is employed instead,
  the discretization error satisfies the improved estimate 
  \begin{align}
    \tn U^e - U_h \tn_h
    \lesssim
    h^{k_u +1} \| u\|_{k+1,\Gamma}
    + h^{k_p} \| p\|_{l+1,\Gamma}
    + h^{k_g} (\| f \|_{\Gamma} + \| g\|_{\Gamma})
    + h^{k_g+1} (\|u\|_{1,\Gamma} + \|p\|_{1,\Gamma}).
    \label{eq:aprior-est-energy-sh2}
  \end{align}
\end{theorem}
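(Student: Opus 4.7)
My plan is a Strang-type argument driven by the coercivity of $B_h$. Write the total error as
\[
U^e - U_h = \eta + \xi \quad \text{with} \quad \eta := U^e - \Pi_h U^e, \; \xi := \Pi_h U^e - U_h \in \mcW_h,
\]
so by the triangle inequality $\tn U^e - U_h \tn_h \leqslant \tn \eta \tn_h + \tn \xi \tn_h$. The interpolation term $\tn \eta \tn_h$ is bounded directly through Lemma~\ref{lem:interpolenergy}: under the slightly increased regularity $u\in [H^{k_u+1}(\Gamma)]^d_t$, $p\in H^{k_p+1}(\Gamma)$ assumed in the theorem, the same chain of estimates as in that lemma delivers
$\tn \eta \tn_h \lesssim h^{k_u+1}\|u\|_{k_u+1,\Gamma} + h^{k_p}\|p\|_{k_p+1,\Gamma}$.

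For $\tn \xi \tn_h$ I first observe the coercivity $B_h(\xi,\xi) \geqslant \tfrac{1}{2} \tn \xi \tn_h^2$: the skew-symmetric cross terms of $A_h(\xi,\xi)$ cancel, leaving exactly $\tfrac{1}{2}\|v\|_{\Gammah}^2 + \tfrac{1}{2}\|\nabla q\|_{\Gammah}^2 + |\xi|_{S_h}^2$ (with $\xi = (v,q)$). Using the discrete equation $B_h(U_h,\xi) = L_h(\xi)$ and the continuous identity $A(U,\xi^l) = L(\xi^l)$, I then decompose
\begin{align*}
\tfrac12 \tn \xi \tn_h^2
&\leqslant B_h(\xi,\xi) \\
&= -A_h(\eta,\xi) + \bigl[A_h(U^e,\xi) - A(U,\xi^l)\bigr] + \bigl[L(\xi^l) - L_h(\xi)\bigr] + S_h(U^e,\xi) - S_h(\eta,\xi).
\end{align*}

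Each summand is handled by a dedicated tool. Continuity of $A_h$ in $\tn\cdot\tn_h$ is immediate since all four of its constituents are $L^2$-pairings of velocities and gradients, so $|A_h(\eta,\xi)| \lesssim \tn\eta\tn_h \tn\xi\tn_h$. The two bracketed geometric-consistency terms are precisely those estimated in the primal bound~\eqref{eq:Lh-quad-est-primal}, giving a joint contribution $h^{k_g}(\|f\|_\Gamma + \|g\|_\Gamma)\tn\xi\tn_h$. For the stabilization pieces, Cauchy--Schwarz in $|\cdot|_{S_h}$ yields $|S_h(\eta,\xi)| \leqslant \tn\eta\tn_h \tn\xi\tn_h$ and $|S_h(U^e,\xi)| \leqslant |U^e|_{S_h} \tn\xi\tn_h$; Lemma~\ref{lem:Sh-const-est} then provides the two variants $|U^e|_{S_h^1}\lesssim h(\|\nablas u\|_\Gamma + \|\nablas p\|_\Gamma)$ and $|U^e|_{S_h^2}\lesssim h^{k_g+1}(\|\nablas u\|_\Gamma + \|\nablas p\|_\Gamma)$. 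Dividing through by $\tn\xi\tn_h$ and combining with the bound on $\tn\eta\tn_h$ produces~\eqref{eq:aprior-est-energy-sh1} and~\eqref{eq:aprior-est-energy-sh2} respectively, the choice of stabilization affecting only the $h$-power of the last term. The elliptic regularity~\eqref{eq:ellreg} may optionally be used to absorb $\|\nablas u\|_\Gamma + \|\nablas p\|_\Gamma$ into $\|f\|_\Gamma + \|g\|_\Gamma$ for a cleaner, data-driven form.

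The main difficulty is really confined to the already-proved Lemma~4.2: ensuring that both $A_h(U^e,\xi) - A(U,\xi^l)$ and $L(\xi^l) - L_h(\xi)$ scale like $h^{k_g}$ despite the full-gradient formulation. The decisive trick there, which I would simply invoke, is the splitting $\nabla = \nablash + \Qsh \nabla$ followed by the sharp operator bounds $\|\Qsh\Ps\|_{L^\infty(\Gamma_h)} \lesssim h^{k_g}$ and $\|\Ps - \Ps\Psh\Ps\|_{L^\infty(\Gamma)}\lesssim h^{2k_g}$ from Lemma~\ref{lem:composed-operator-bounds} together with the determinant estimate of Lemma~\ref{lem:detBbounds}. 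Once this is accepted, the present theorem reduces to bookkeeping, and the dichotomy between the two stabilization variants is captured entirely by the corresponding line of Lemma~\ref{lem:Sh-const-est}.
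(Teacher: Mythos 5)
Your argument is correct and follows essentially the same route as the paper: a Strang-type decomposition into interpolation error, geometric/quadrature consistency error (Lemma~\ref{lem:Sh-const-est} and estimate~\eqref{eq:Lh-quad-est-primal}), and stabilization consistency, driven by the coercivity $B_h(\xi,\xi)\geqslant\tfrac12\tn\xi\tn_h^2$. The only cosmetic difference is that you instantiate the comparison function directly as $\Pi_h U^e$ where the paper keeps an arbitrary $V_h$ and takes the infimum at the end.
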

\begin{proof}
  We start with considering the ``discrete error'' $E_h = U_h - V_h$. Observe that
  \begin{align}
    \tn E_h \tn_h^2
    & = B_h(U_h - V_h,E_h)
    \\
    & =
    L_h(E_h) - B_h(U^e, E_h) + B_h(U^e - V_h,E_h)
    \\
    & \lesssim
    \biggl(
    \sup_{V_h \in \VV_h}
    \dfrac{
      L_h(V_h) - B_h(U^e,V_h)
    }{
      \tn V_h \tn_h
    }
    +
      \tn U^e - V_h \tn_h
    \biggr)
    \tn E_h \tn_h.
    \label{eq:strang-energy-lemma-step-3}
  \end{align}
  Dividing by $\tn E_h \tn_h$ and applying the identity
  \begin{align}
    L_h(V_h) - B_h(U^e, V_h)
    &=
    \bigl(
    L_h(V_h) -
    L(V_h^l)
    \bigr)
    +
    \bigl(A(U,V_h^l) - A_h(U^e, V_h)
    \bigr)- S_h(U^e, V_h)
  \end{align}
  gives together with the triangle inequality
  $\tn U^e - U_h \tn_h \leqslant \tn U^e - V_h \tn_h
  + \tn E_h \tn_h$ 
  the following Strang-type estimate for the energy error,
  \begin{align}
    \tn U^e - U_h \tn_h
    &\lesssim
    \inf_{V_h \in \VV_h}
    \tn  U^e - V_h \tn_h
    + \sup_{V_h \in \VV_h}
    \dfrac{
      L_h(V_h) - B_h(U^e,V_h)
    }{
      \tn V_h \tn_h
    }
    \label{eq:strang-energy-1}
    \\
    &\lesssim
    \inf_{V_h \in \VV_h}
    \tn U^e - V_h \tn_h
    + \sup_{V_h \in \VV_h}
    \dfrac{
      L_h(V_h) - L(V_h^l)
    }{
      \tn V_h \tn_h
    }
    + \sup_{V_h \in \VV_h}
    \dfrac{
      A(U,V_h^l) - A_h(U^e,V_h)
    }{
      \tn V_h \tn_h
    }
    \nonumber
    \\
    &\phantom{\leqslant}
    + \sup_{v \in \VV_h}
    \dfrac{
      S_h(U^e,V_h)
    }{
      \tn V_h \tn_h
    }.
      \label{eq:strang-energy-2}
  \end{align}
  Estimates~(\ref{eq:aprior-est-energy-sh1}) and~(\ref{eq:aprior-est-energy-sh2})
  now follow directly from
  inserting the interpolation estimate~(\ref{eq:interpolenergy}),
  the quadrature error estimate~\eqref{eq:Lh-quad-est-primal}
  and, depending on the choice
  of $S_h$, the proper consistency error estimate from Lemma~\ref{lem:Sh-const-est}
  into~\eqref{eq:strang-energy-2}.
\end{proof}
Next, we  provide bounds for the $L^2$ error of the pressure approximation as well
as the $H^{-1}$ error of the tangential component of the
velocity approximation.
\begin{theorem}
  \label{thm:priori-error-estim-dual}
  Under the same assumptions as made in Theorem~\ref{thm:priori-error-estim},
  the following a priori error estimate holds
  \begin{align}
    \| p - p_h^l \|_{\Gamma}
    + \| \Ps (u - u_h^l) \|_{-1,\Gamma}
    &\lesssim
      h C_U,
  \end{align}
  with $C_U$ being the convergence rate predicted by Theorem~\ref{thm:priori-error-estim}.
\end{theorem}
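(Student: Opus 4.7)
The plan is to run a standard Aubin--Nitsche duality argument, carried out separately for the two target norms, which boosts the energy bound of Theorem~\ref{thm:priori-error-estim} by one power of $h$. For the pressure I set $\psi_p := p - p_h^l$ and $\psi_u := 0$; for the velocity I pick $\psi_p := 0$ and $\psi_u \in [H^1(\Gamma)]^d_t$ with $\|\psi_u\|_{1,\Gamma} \leqslant 1$, so that the $H^{-1}$ norm emerges as a supremum. In both cases I seek the adjoint solution $\Phi = (\phi_u, \phi_p) \in \VV$ satisfying
\begin{equation}
A(V, \Phi) = (\Ps v, \psi_u)_\Gamma + (q, \psi_p)_\Gamma \quad \text{for all } V = (v,q) \in \VV.
\end{equation}
A short computation reduces this system to $\phi_u = \nablas \phi_p + 2\psi_u$ together with $\Delta_\Gamma \phi_p = -\psi_p - \divs \psi_u$, i.e.\ a Laplace--Beltrami equation for $\phi_p$ with right-hand side in $L^2(\Gamma)$; hence~\eqref{eq:ellreg} yields $\phi_u \in [H^1(\Gamma)]^d_t$ and $\phi_p \in H^2(\Gamma)$ with $\|\phi_u\|_{1,\Gamma} + \|\phi_p\|_{2,\Gamma} \lesssim \|\psi_u\|_{1,\Gamma} + \|\psi_p\|_\Gamma$.

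Writing $E := (u - u_h^l,\, p - p_h^l)$ and testing the adjoint problem with $V = E$ produces the error quantity of interest on the left-hand side. Inserting $\Phi_h := \Pi_h \Phi^e$ I split
\begin{equation}
A(E, \Phi) = A(E, \Phi - \Phi_h^l) + A(E, \Phi_h^l).
\end{equation}
Continuity of $A$ combined with Lemma~\ref{lem:interpolenergy} bounds the first summand by $h \tn E \tn (\|\phi_u\|_{1,\Gamma} + \|\phi_p\|_{2,\Gamma})$, which in view of $\tn E \tn \lesssim C_U$ (Theorem~\ref{thm:priori-error-estim}, after lifting) gives $\lesssim h C_U (\|\phi_u\|_{1,\Gamma} + \|\phi_p\|_{2,\Gamma})$.

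For the second summand I exploit $A(U, \Phi_h^l) = L(\Phi_h^l)$ together with the discrete equation $B_h(U_h, \Phi_h) = L_h(\Phi_h)$ to rewrite
\begin{equation}
A(E, \Phi_h^l) = \bigl[L(\Phi_h^l) - L_h(\Phi_h)\bigr] + \bigl[A_h(U_h, \Phi_h) - A(U_h^l, \Phi_h^l)\bigr] + S_h(U_h, \Phi_h).
\end{equation}
The first two brackets are precisely of the type controlled by the improved dual quadrature estimate~\eqref{eq:Lh-quad-est-dual}; since the proof of that estimate only uses the change-of-variables formula and the operator bounds of Lemma~\ref{lem:composed-operator-bounds} applied to the ``primary'' argument, the same chain of estimates applies with $U$ replaced by $U_h^l$, the role of $\|f\|_\Gamma + \|g\|_\Gamma$ now being played by $\tn U_h^l \tn \lesssim \|f\|_\Gamma + \|g\|_\Gamma + C_U$. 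Each bracket thus contributes $\lesssim h^{k_g+1}(\|\phi_u\|_{1,\Gamma} + \|\phi_p\|_{2,\Gamma}) \lesssim h C_U (\|\phi_u\|_{1,\Gamma} + \|\phi_p\|_{2,\Gamma})$. The stabilization term is treated by Cauchy--Schwarz: $|U_h|_{S_h} \lesssim C_U$ follows from Theorem~\ref{thm:priori-error-estim} together with Lemma~\ref{lem:Sh-const-est} applied to $U^e$, while $|\Phi_h|_{S_h} \lesssim h(\|\phi_u\|_{1,\Gamma} + \|\phi_p\|_{2,\Gamma})$ follows from $H^1$-stability of $\Pi_h$ combined with Lemma~\ref{lem:Sh-const-est}.

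Collecting all contributions, invoking elliptic regularity to replace $\|\phi_u\|_{1,\Gamma} + \|\phi_p\|_{2,\Gamma}$ by $\|\psi_u\|_{1,\Gamma} + \|\psi_p\|_\Gamma$, and finally (for the pressure case) dividing by $\|p - p_h^l\|_\Gamma$ or (for the velocity case) taking the supremum over $\|\psi_u\|_{1,\Gamma} \leqslant 1$ gives the stated bound. I expect the main technical hurdle to be the bookkeeping required to transfer~\eqref{eq:Lh-quad-est-dual} from the continuous $U$ to the discrete lift $U_h^l$ (essentially verifying that the original proof only depends on the energy-type bound $\tn U \tn \lesssim \|f\|_\Gamma + \|g\|_\Gamma$, which can be replaced by its discrete analogue). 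Checking that the adjoint Masud--Hughes problem admits the same elliptic regularity as the primal one is a secondary but straightforward consistency point, since eliminating $\phi_u$ reduces the adjoint system to Laplace--Beltrami as above.
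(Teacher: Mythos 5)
Your proposal follows essentially the same route as the paper: the same Aubin--Nitsche duality with the dual problem \eqref{eq:dual-problem} (your factor $2$ in $\phi_u = \nablas \phi_p + 2\psi_u$ is just a harmless renormalization of the data), the same splitting of $A(E,\Phi)$ into an interpolation part, the two quadrature brackets handled by \eqref{eq:Lh-quad-est-dual}, and a Cauchy--Schwarz/consistency argument for $S_h(U_h,\Phi_h)$. Your worry about transferring \eqref{eq:Lh-quad-est-dual} from $U$ to $U_h^l$ is legitimate but puts you on exactly the same footing as the paper, which cites the lemma for the bracket $A_h(U_h,\Phi_h)-A(U_h^l,\Phi_h^l)$ without further comment.

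There is, however, one genuine gap: the choice $\psi_p := p - p_h^l$ is not admissible dual data. On a closed surface the equation $-\divs \phi_u = \psi_p$ is solvable only for $\psi_p \in L^2_0(\Gamma)$; equivalently, your weak adjoint formulation tests against $q \in H^1(\Gamma)/\RR$, so it can only ever see the mean-zero part of $\psi_p$. But $p - p_h^l$ is generically \emph{not} mean-zero on $\Gamma$, because the discrete pressure is normalized by $\lambda_{\Gamma_h}(p_h)=0$ on the discrete surface and the lift does not preserve the average. The duality argument therefore only controls $\widetilde{e}_p = p - \bigl(p_h^l - \lambda_{\Gamma}(p_h^l)\bigr)$, and the constant remainder $\overline{e}_p = \lambda_{\Gamma}(p_h^l) - \lambda_{\Gamma_h}(p_h)$ requires a separate argument. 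The paper supplies it in \eqref{eq:error-of-average-est}: the difference of the two averages is written as an integral of $(1-c)\,p_h$ with $c = |\Gamma_h||\Gamma|^{-1}|B|$, and the bounds $\|1-c\|_{L^\infty} \lesssim h^{k_g+1}$ from \eqref{eq:detBbound} together with the discrete Poincar\'e inequality \eqref{eq:discrete-poincare-Gammah} and the stability of $\|\nablash p_h\|_{\Gamma_h}$ give $\|\overline{e}_p\|_{\Gamma} \lesssim h^{k_g+1}(\|f\|_{\Gamma}+\|g\|_{\Gamma})$. Without this step the claimed bound on the full $L^2$ error $\|p - p_h^l\|_{\Gamma}$ does not follow from your argument. (The velocity part of your proof is unaffected: with $\psi_p=0$ the compatibility condition is automatic, and your normalization of the adjoint even removes the extra term $\tfrac12(e_p,\divs\psi_u)_\Gamma$ that the paper has to absorb using the already-established pressure bound.)
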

\begin{proof}
  The proof uses a standard Aubin-Nitsche duality argument 
  employing the dual problem
  \begin{subequations}
    \label{eq:dual-problem}
    \begin{alignat}{2}
      -\divs \phi_u &= \psi_p  &\quad \text{on } \Gamma,
      \\
      \phi_u - \nablas \phi_p &= \psi_u  &\quad \text{on $\Gamma$},
    \end{alignat}
  \end{subequations}
  with $(\psi_u, \psi_p) \in [H^1(\Gamma)]^d_t \times L^2_0(\Gamma)$.
  For the error representation to be derived it is sufficient to
  consider $(\psi_u, \psi_p)$ such that $ \| \psi_u \|_{1,\Gamma} +  \| \psi_p \|_{\Gamma} \lesssim 1$.
  Thanks to the regularity result~(\ref{eq:ellreg}), the
solution $(\phi_u, \phi_p) \in [H^1(\Gamma)]^d_t \times H^2(\Gamma)
\cap L^2_0(\Gamma)$ then satisfies the stability estimate
  \begin{align}
    \| \phi_u \|_{1,\Gamma} + \| \phi_p \|_{2,\Gamma}
    \lesssim 1.
    \label{eq:stability-dual-problem}
  \end{align}
  Set $E = U - U_h^l$ and
  insert the dual solution $\Phi$ as test function into $A(E,\cdot$).
  Then adding and subtracting suitable terms leads us to
  \begin{align}
    A(E,\Phi) &= A(E,\Phi - \Phi_h^l) + A(E, \Phi_h^l)
      \label{eq:ep-error-repres-I}
    \\
    &= A(E,\Phi - \Phi_h^l) + L(\Phi_h^l) - A(U_h^l, \Phi_h^l)
    \\
    &= A(E,\Phi - \Phi_h^l) +
      \left(
      L(\Phi_h^l) - L_h(\Phi_h)
      \right)+
      \left(
      A_h(U_h, \Phi_h) - A(U_h^l, \Phi_h^l)
      \right)
      \\
      &\quad + S_h(U_h, \Phi_h)
    \\
    &= I + II + III + IV.
      \label{eq:ep-error-repres-final}
  \end{align}
  where in the last step, we employed the identity $B_h(U_h, \Phi_h) - L_h(\Phi_h) = 0$.
  Interpolation estimate~(\ref{eq:interpolenergy}) together with stability
  estimate~(\ref{eq:stability-dual-problem}) implies that
  \begin{align}
    I &\lesssim \tn E \tn  \tn \Phi - \Phi_h^l \tn
        \lesssim h \tn E \tn_h
        (
        \| \phi_u \|_{1,\Gamma}
        +
        \| \phi_p \|_{2,\Gamma}
        )
        \lesssim h \tn E \tn_h.
  \end{align}
  Next, the improved quadrature error estimates~(\ref{eq:Lh-quad-est-dual}) 
  and the stability bound~(\ref{eq:stability-dual-problem}) imply that
  \begin{align}
    II + III & \lesssim
    h^{k_g+1}
    (\|f\|_{\Gamma} + \|g\|_{\Gamma})
    (\| \phi_u \|_{1,\Gamma} + \|\phi_p\|_{2,\Gamma})
    \lesssim
    h^{k_g+1}
    (\|f\|_{\Gamma} + \|g\|_{\Gamma}).
  \end{align}
  Finally, after adding and subtracting $U^e$ and $\Phi^e$,
  the consistency error can be bounded
  by
  \begin{align}
    IV &=
    S_h(U_h - U^e, \Phi_h - \Phi^e)
    + S_h(U_h - U^e, \Phi^e)
    + S_h(U^e, \Phi_h - \Phi^e)
         + S_h(U^e, \Phi^e)
    \\
    &\lesssim
    \tn U_h - U^e \tn_h \tn \Phi_h - \Phi^e \tn_h
    + \tn U_h - U^e \tn_h  | \Phi^e|_{S_h}
    + | U^e |_{S_h}  \tn \Phi_h - \Phi^e \tn_h
    +
    | \Phi^e|_{S_h}
      | U^e |_{S_h}
    \\
    &\lesssim h C_U,
  \end{align}
  where in the last step,
  the energy error estimate from Theorem~\ref{thm:priori-error-estim},
  the interpolation estimate~(\ref{eq:interpolenergy}), the consistency error estimates from
  Lemma~\ref{lem:Sh-const-est} and the stability bound~(\ref{eq:stability-dual-problem})
  were successively applied.
  Collecting the estimates for term $I$--$IV$ shows that 
  \begin{align}
    |A(E,\Phi)| \lesssim  h C_U.
    \label{eq:Ah-with-Phi-bound}
  \end{align}
  
  Next,  using the shorthand notation $E = (e_u, e_p) = (u-u_h^l, p-p_h^l)$,
  we exploit the properties of the dual problem to derive an error representation
  for $\|e_p\|_{\Gamma}$ and $\|\Ps e_u \|_{-1,\Gamma}$
  in terms of $A(E, \Phi)$ to establish the desired bounds using~\eqref{eq:Ah-with-Phi-bound}.
  Since \mbox{$\lambda_{\Gamma_h}(p_h) = 0$} but not necessarily $\lambda_{\Gamma}(p_h^l)$,
  we first decompose the pressure error $e_p$ into a normalized part $\widetilde{e}_p$
  satisfying $\lambda_{\Gamma}(\widetilde{e}_p) = 0$ and a constant part $\overline{e}_p$,
  \begin{align}
    e_p = p - p_h^l =
    \underbrace{p - (p_h^l - \lambda_{\Gamma}(p_h^ l))}_{\widetilde{e}_p}
    +
    \underbrace{\lambda_{\Gamma}(p_h^ l) - \lambda_{\Gammah}(p_h) }_{\overline{e}_p}.
  \end{align}
  Then the properties of dual solution $\Phi$ 
  together with the observations that $\phi_u = \Ps\phi_u$, $\nabla \phi_p = \nablas \phi_p$
  and $\nabla e_p = \nabla \widetilde{e}_p$  lead us to the identity
  \begin{align}
    A(E,\Phi)
    &= (e_u, \phi_u)_{\Gamma}
    - (e_u, \nabla \phi_p)_{\Gamma}
    + (\nabla \widetilde{e}_p, \phi_u)_{\Gamma}
    + \onehalf (e_u + \nabla e_p, -\phi_u + \nabla \phi_p)_{\Gamma}
    \\
    &= (e_u, \psi_u)_{\Gamma} - (\widetilde{e}_p, \divs \phi_u)_{\Gamma}
      - \onehalf (e_u + \nabla e_p, \psi_u)_{\Gamma}
      \\
    &=
      \onehalf (e_u, \psi_u)_{\Gamma}
      + (\widetilde{e}_p, \psi_p)_{\Gamma}
      + \onehalf (e_p, \divs \psi_u)_{\Gamma}.
      \label{eq:dual-error-repres}
  \end{align}
  Thus choosing $\psi_u = 0$ and $\psi_p \in L^2_0(\Gamma)$,
  the normalized pressure error can be bounded as follows
  \begin{align}
    \| \widetilde{e}_p \|_{\Gamma}
    &= \sup_{\psi \in L^2_0(\Gamma), \|\psi_p\|_{\Gamma} = 1} (e_p, \psi_p)_{\Gamma}
    = \sup_{\psi \in L^2_0(\Gamma), \|\psi_p\|_{\Gamma} = 1} A(E,\Phi(0,\psi_p))
    \lesssim h C_U.
  \end{align}
  Turning to constant error part $\overline{e}_p$ and
  unwinding the definition of the average operators
  $\lambda_{\Gammah}(\cdot)$ and $\lambda_{\Gamma}(\cdot)$
  yields
  \begin{align}
    \|\overline{e}_p\|_{\Gamma} 
    = |\Gamma|^{\onehalf}
    \left|
    \dfrac{1}{|\Gamma|}
    \int_{\Gamma} p_h^l  \ds
   - 
    \dfrac{1}{|\Gamma_h|}
    \int_{\Gamma_h} p_h  \dsh
   \right|
   \lesssim
    \dfrac{|\Gamma_h|^{\onehalf}}{|\Gamma_h|}
    \int_{\Gamma_h} |1-c| |p_h|  \dsh,
\label{eq:error-of-average-est}
  \end{align}
  with $c = |\Gamma_h||\Gamma|^{-1} |B|$.
  We note  that 
  $\| 1 - c \|_{L^\infty(\Gamma)} \lesssim h^{k_g+1}$
  thanks to~\eqref{eq:detBbound}. Consequently, after
  successively applying a Cauchy-Schwarz inequality,
  the Poincar\'e inequality~(\ref{eq:discrete-poincare-Gammah})
  and the stability bound
  $\| \nablash p_h \|_{\Gammah} \lesssim \|f \|_{\Gamma} + \|g \|_{\Gamma}$,
  we arrive at 
  \begin{align}
    \|\overline{e}_p\|_{\Gamma} 
    \lesssim
    h^{k_g+1} \| p_h\|_\Gamma
    \lesssim
    h^{k_g+1} \| \nabla p_h\|_{\Gammah}
    \lesssim
    h^{k_g+1}
    (\|f \|_{\Gamma} + (\|g \|_{\Gamma}),
  \end{align}
  which concludes the derivation of the desired estimate for $\| e_p \|_{\Gamma}$.

  Finally, to estimate $\|\Ps(u-u_h^l)\|_{-1,\Gamma}$, we let $\Phi$ be the solution
  to the dual problem~(\ref{eq:dual-problem}) for
  right-hand side data $(\psi_u, 0)$ with $\psi_u \in [H^1(\Gamma)]^d_t$. Inserting $\Phi$
  into~\eqref{eq:dual-error-repres} 
  \begin{align}
    |(e_u, \psi_u)_{\Gamma}| \lesssim |A(E, \Phi)| + \|e_p\|_{\Gamma} \|\psi_u\|_{1,\Gamma},
  \end{align}
  and consequently, the general bound~(\ref{eq:Ah-with-Phi-bound}) for $A(E,\Phi)$
  together with bound for $L^2$ error of the pressure allows
  us to derive the final estimate for $e_u$,
  \begin{align}
    \| \Ps e_u \|_{-1, \Gamma}
    &=
    \sup_{\psi_u \in [H^1(\Gamma)]^d_t, \|\psi_u\|_{-1,\Gamma} = 1}
      (e_u, \psi_u)_{\Gamma}
      \lesssim
      |A(E, \Phi)| + \|e_p\|_{\Gamma}
      \lesssim h C_u.
  \end{align}
\end{proof}

\section{Numerical Results}
\label{sec:numerical-results}
To numerically examine the rate of convergence predicted by the a priori error
estimates derived in Section~\ref{ssec:priori-error-estim},
we now perform a series of convergence studies.
Following the numerical example presented in~\cite{HansboLarson2016},
we consider the Darcy problem
posed on the torus surface $\Gamma$ defined by
\begin{align}
  \Gamma
  =
  \{
    x \in \RR^3 : r^2 = x_3^2 + (\sqrt{x_1^2 + x_2^2} -R)^2
  \},
\label{eq:torus-levelset}
\end{align}
with major radius $R = 1.0$ and minor radius $r = 0.5$ and
define a manufactured solution $(u,p)$ by
\begin{align}
  u_t =
  \Bigl(
  2xz, -2yz, 2(x^2 - y^2)(R- \sqrt{x^2 + y^2})/\sqrt{x^2 + y^2}
  \Bigr),
  \quad
  u_n = 0,
  \quad
  p = z,
\end{align}
which satisfies the Darcy problem~(\ref{eq:darcy-problem-cont})
with right-hand sides $f = \divs u = 0$ and
\begin{align}
  g =
  \begin{pmatrix}
    xz(2 - (1 - \tfrac{R}{\sqrt{x^2 + y^2}})/A)
    \\
    yz(-2 - (1 - \tfrac{R}{\sqrt{x^2 + y^2}})/A)
    \\
  1 - \tfrac{2(x^2 - y^2)(\sqrt{x^2 + y^2} -R)}{\sqrt{x^2 + y^2}} - z^2/A
\end{pmatrix},
  \quad
  \text{with }
  A = (R^2 + x^2 + y^2 - 2R\sqrt{x^2 + y^2} + z^2).
\end{align}
A sequence of meshes $\{\mcT_k\}_{k=0}^l$ with uniform
mesh sizes $h_k = 2^{-k} h_0$ with $h_0 \approx 0.24$ is generated by uniformly
refining an initial, structured background mesh $\widetilde{\mcT}_0$
for $\Omega = [-1.65,1.65]^3 \supset \Gamma$
and extracting at each refinement level $k$
the active (background) mesh
as defined by \eqref{eq:narrow-band-mesh}.
\begin{figure}[htb]
    \begin{subfigure}[b]{0.60\textwidth}
    \includegraphics[width=1.0\textwidth]{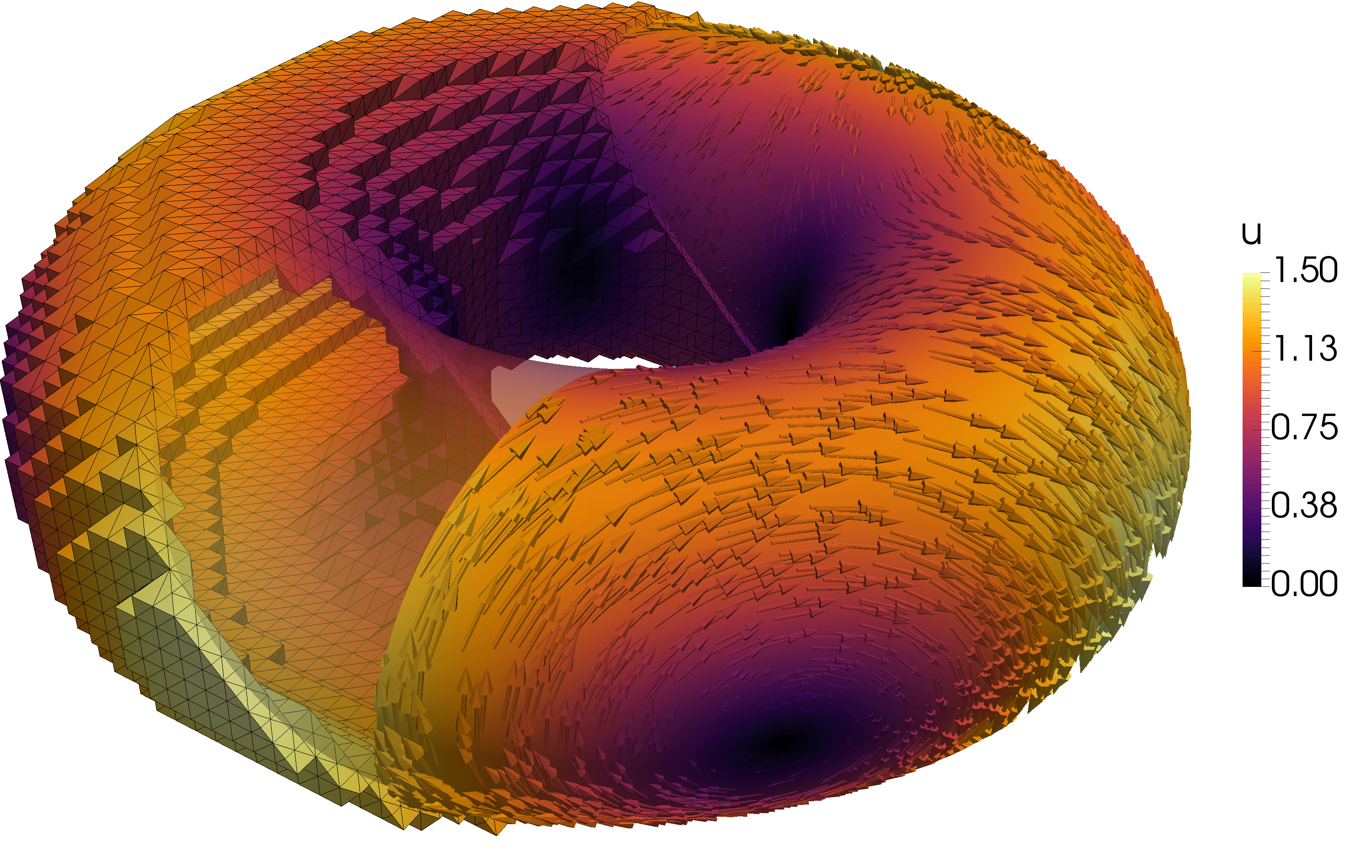}
    \end{subfigure}
    \\[2ex]
    \begin{subfigure}[b]{0.60\textwidth}
    \includegraphics[width=1.0\textwidth]{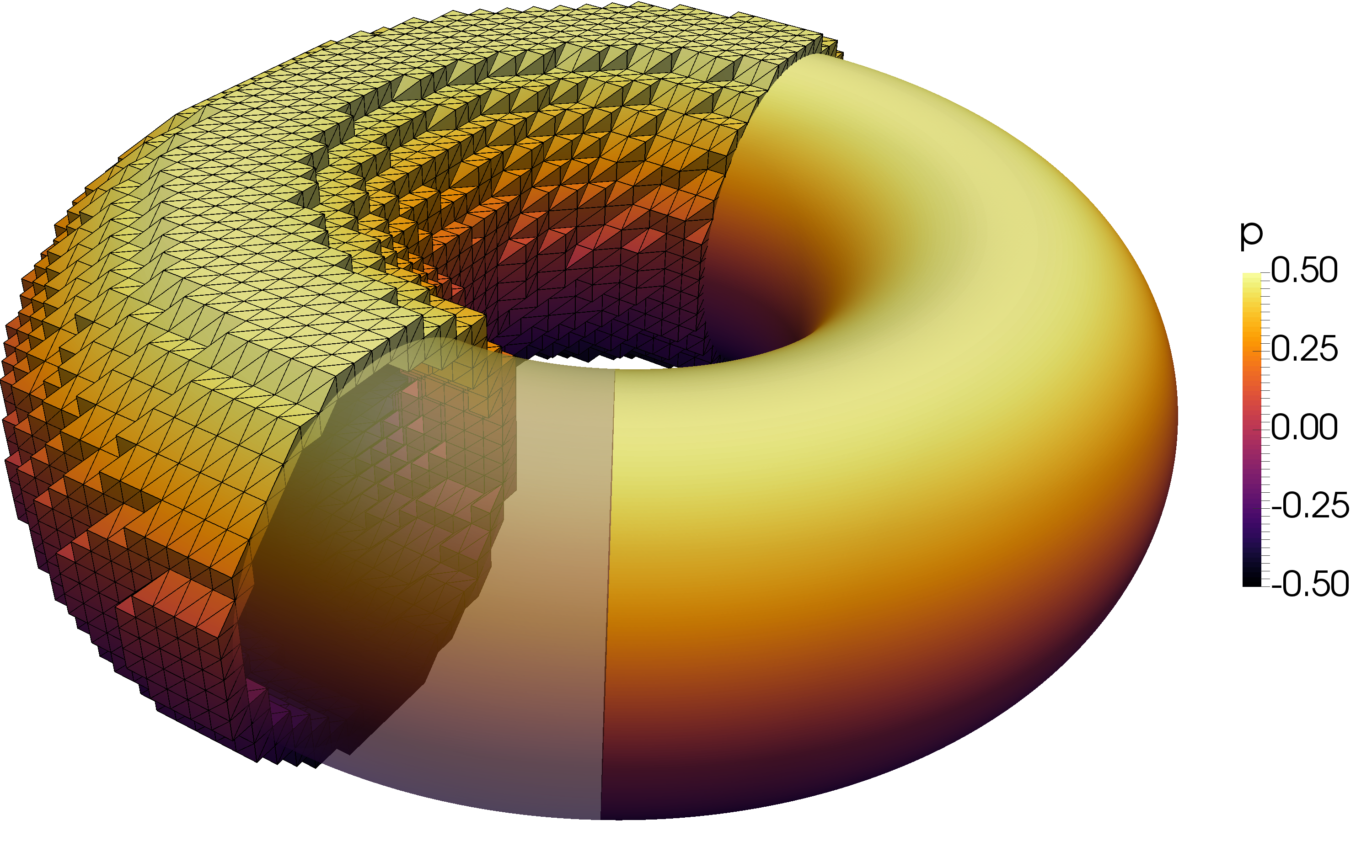}
    \end{subfigure}
    \hspace{0.5ex}
    \caption{Plots of the velocity (top) and pressure (bottom) approximations
      computed for $(k_u, k_p, k_g) = (1,2,2)$ on the finest refinement level.
      Each plot shows both the solution as
      computed on the active mesh $\mcT_h$ and its restriction 
      to the surface mesh $\mcK_h$.
      For the velocity, the magnitude and the computed vector field
      are displayed, illustrating the weak enforcement of the tangential
      condition $u \cdot n = 0$ in the discrete vector field $u_h$.
    }
    \label{fig:solution-plots}
\end{figure}
For a given error norm, the corresponding
experimental order of convergence (EOC) at
refinement level $k$ is calculated using the formula
\begin{align*}
  \text{EOC}(k) = \dfrac{\log(E_{k-1}/E_{k})}{\log(2)},
\end{align*}
with $E_k$ denoting error of the computed discrete velocity
$u_k$ or pressure  $p_k$ at refinement level $k$.

To study the combined effect of chosing various approximation orders
$k_u$, $k_p$ and $k_g$ and stabilization forms $S_h^i$ on the overall
approximation quality of the discrete solution,
we conduct convergence experiments for 6 different scenarios.
For each scenario, we compute the $L^2$ norm of the velocity error
$e_u = u^e - u_h$ as well as the $H^1$ and $L^2$ norms of the
pressure error $e_p = p^e - p_h$ which are displayed in Table~\ref{tab:eoc-tables}.
A short summary of the cases considered and the theoretically expected convergence rates
are given Table~\ref{tab:eoc-cases}.
  The computed EOC data in Table~\ref{tab:eoc-tables} clearly confirms the
  predicted convergence rates.
  In particular, we observe that increasing the pressure approximation to $k_p = 2$
  does only increase the convergence order for all considered error norms by one
  if both a high order approximation $k_g = 2$ of $\Gammah$ and
  the higher-order consistent normal stabilization $S_h^2$ are used.
  Finally, the discrete solution components computed for $(k_u, k_p, k_g) = (1,2,2)$ at
  the finest refinement level are visualized in Figure~\ref{fig:solution-plots}.
  \begin{table}[htb]
  \centering
   \footnotesize
  \begin{tabular}{c c c c c c c c}
    \toprule
    Case & $k_u$ & $k_p$ & $k_g$ & $S_h$ & $\| e_u \|_{\Gammah}$ & $\| e_u \|_{1,\Gammah}$ & $\| e_p \|_{\Gammah}$
    \\
    \midrule
    1 & 1 & 1 & 1 & $S_h^1$ & 1 & 1 & 2 \\
    2 & 1 & 1 & 1 & $S_h^2$ & 1 & 1 & 2 \\
    3 & 1 & 2 & 1 & $S_h^1$ & 1 & 1 & 2 \\
    4 & 1 & 2 & 1 & $S_h^2$ & 1 & 1 & 2 \\
    5 & 1 & 2 & 2 & $S_h^1$ & 1 & 1 & 2 \\
    6 & 1 & 2 & 2 & $S_h^2$ & 2 & 2 & 3 \\
    \bottomrule
  \end{tabular}
  \caption{Summary of the 6 cases considered in the convergence
experiments and the corresponding theoretical convergence rates
predicted by Theorem~\ref{thm:priori-error-estim}
and~Theorem~\ref{thm:priori-error-estim-dual}.
  }
    \label{tab:eoc-cases}
   \footnotesize
    \end{table}
\begin{table}[htb]
  \centering
  \begin{subtable}[t]{1.0\textwidth}
    \centering
    \begin {tabular}{cr<{\pgfplotstableresetcolortbloverhangright }@{}l<{\pgfplotstableresetcolortbloverhangleft }cr<{\pgfplotstableresetcolortbloverhangright }@{}l<{\pgfplotstableresetcolortbloverhangleft }cr<{\pgfplotstableresetcolortbloverhangright }@{}l<{\pgfplotstableresetcolortbloverhangleft }c}%
\toprule $k$&\multicolumn {2}{c}{$\|u_k - u^e \|_{\Gamma _h}$}&EOC&\multicolumn {2}{c}{$\|p_k - p^e \|_{1,\Gamma _h}$}&EOC&\multicolumn {2}{c}{$\|p_k - p^e \|_{\Gamma _h}$}&EOC\\\midrule %
\pgfutilensuremath {0}&$9.71$&$\cdot 10^{-1}$&--&$1.10$&$\cdot 10^{0}$&--&$1.69$&$\cdot 10^{-1}$&--\\%
\pgfutilensuremath {1}&$3.06$&$\cdot 10^{-1}$&\pgfutilensuremath {1.67}&$6.46$&$\cdot 10^{-1}$&\pgfutilensuremath {0.77}&$5.29$&$\cdot 10^{-2}$&\pgfutilensuremath {1.68}\\%
\pgfutilensuremath {2}&$9.04$&$\cdot 10^{-2}$&\pgfutilensuremath {1.76}&$3.10$&$\cdot 10^{-1}$&\pgfutilensuremath {1.06}&$1.18$&$\cdot 10^{-2}$&\pgfutilensuremath {2.16}\\%
\pgfutilensuremath {3}&$3.08$&$\cdot 10^{-2}$&\pgfutilensuremath {1.55}&$1.53$&$\cdot 10^{-1}$&\pgfutilensuremath {1.02}&$2.80$&$\cdot 10^{-3}$&\pgfutilensuremath {2.08}\\%
\pgfutilensuremath {4}&$1.30$&$\cdot 10^{-2}$&\pgfutilensuremath {1.25}&$7.72$&$\cdot 10^{-2}$&\pgfutilensuremath {0.99}&$6.86$&$\cdot 10^{-4}$&\pgfutilensuremath {2.03}\\\bottomrule %
\end {tabular}%

    \\[1ex]
    \caption{Case 1: $(k_u, k_p, k_g) = (1,1,1)$ and full gradient stabilization $S_h = S_h^1$.}
  \end{subtable}
  \begin{subtable}[t]{1.0\textwidth}
    \centering
    \begin {tabular}{cr<{\pgfplotstableresetcolortbloverhangright }@{}l<{\pgfplotstableresetcolortbloverhangleft }cr<{\pgfplotstableresetcolortbloverhangright }@{}l<{\pgfplotstableresetcolortbloverhangleft }cr<{\pgfplotstableresetcolortbloverhangright }@{}l<{\pgfplotstableresetcolortbloverhangleft }c}%
\toprule $k$&\multicolumn {2}{c}{$\|u_k - u^e \|_{\Gamma _h}$}&EOC&\multicolumn {2}{c}{$\|p_k - p^e \|_{1,\Gamma _h}$}&EOC&\multicolumn {2}{c}{$\|p_k - p^e \|_{\Gamma _h}$}&EOC\\\midrule %
\pgfutilensuremath {0}&$4.70$&$\cdot 10^{-1}$&--&$1.28$&$\cdot 10^{0}$&--&$7.31$&$\cdot 10^{-2}$&--\\%
\pgfutilensuremath {1}&$1.43$&$\cdot 10^{-1}$&\pgfutilensuremath {1.72}&$6.45$&$\cdot 10^{-1}$&\pgfutilensuremath {0.99}&$2.08$&$\cdot 10^{-2}$&\pgfutilensuremath {1.82}\\%
\pgfutilensuremath {2}&$4.75$&$\cdot 10^{-2}$&\pgfutilensuremath {1.58}&$3.14$&$\cdot 10^{-1}$&\pgfutilensuremath {1.04}&$4.84$&$\cdot 10^{-3}$&\pgfutilensuremath {2.10}\\%
\pgfutilensuremath {3}&$2.06$&$\cdot 10^{-2}$&\pgfutilensuremath {1.21}&$1.57$&$\cdot 10^{-1}$&\pgfutilensuremath {0.99}&$1.19$&$\cdot 10^{-3}$&\pgfutilensuremath {2.02}\\%
\pgfutilensuremath {4}&$9.92$&$\cdot 10^{-3}$&\pgfutilensuremath {1.05}&$7.80$&$\cdot 10^{-2}$&\pgfutilensuremath {1.01}&$2.82$&$\cdot 10^{-4}$&\pgfutilensuremath {2.08}\\\bottomrule %
\end {tabular}%

    \\[1ex]
    \caption{Case 2: $(k_u, k_p, k_g) = (1,1,1)$ and normal gradient stabilization $S_h = S_h^2$.}
  \end{subtable}
  \begin{subtable}[t]{1.0\textwidth}
    \centering
    \begin {tabular}{cr<{\pgfplotstableresetcolortbloverhangright }@{}l<{\pgfplotstableresetcolortbloverhangleft }cr<{\pgfplotstableresetcolortbloverhangright }@{}l<{\pgfplotstableresetcolortbloverhangleft }cr<{\pgfplotstableresetcolortbloverhangright }@{}l<{\pgfplotstableresetcolortbloverhangleft }c}%
\toprule $k$&\multicolumn {2}{c}{$\|u_k - u^e \|_{\Gamma _h}$}&EOC&\multicolumn {2}{c}{$\|p_k - p^e \|_{1,\Gamma _h}$}&EOC&\multicolumn {2}{c}{$\|p_k - p^e \|_{\Gamma _h}$}&EOC\\\midrule %
\pgfutilensuremath {0}&$4.69$&$\cdot 10^{-1}$&--&$1.30$&$\cdot 10^{0}$&--&$7.42$&$\cdot 10^{-2}$&--\\%
\pgfutilensuremath {1}&$1.43$&$\cdot 10^{-1}$&\pgfutilensuremath {1.72}&$6.46$&$\cdot 10^{-1}$&\pgfutilensuremath {1.01}&$2.08$&$\cdot 10^{-2}$&\pgfutilensuremath {1.83}\\%
\pgfutilensuremath {2}&$4.75$&$\cdot 10^{-2}$&\pgfutilensuremath {1.58}&$3.14$&$\cdot 10^{-1}$&\pgfutilensuremath {1.04}&$4.85$&$\cdot 10^{-3}$&\pgfutilensuremath {2.10}\\%
\pgfutilensuremath {3}&$2.06$&$\cdot 10^{-2}$&\pgfutilensuremath {1.21}&$1.57$&$\cdot 10^{-1}$&\pgfutilensuremath {0.99}&$1.19$&$\cdot 10^{-3}$&\pgfutilensuremath {2.02}\\%
\pgfutilensuremath {4}&$9.92$&$\cdot 10^{-3}$&\pgfutilensuremath {1.05}&$7.80$&$\cdot 10^{-2}$&\pgfutilensuremath {1.01}&$2.82$&$\cdot 10^{-4}$&\pgfutilensuremath {2.08}\\\bottomrule %
\end {tabular}%

    \\[1ex]
    \caption{Case 3: $(k_u, k_p, k_g) = (1,2,1)$ and full gradient stabilization $S_h = S_h^1$.}
  \end{subtable}
  \begin{subtable}[t]{1.0\textwidth}
    \centering
    \begin {tabular}{cr<{\pgfplotstableresetcolortbloverhangright }@{}l<{\pgfplotstableresetcolortbloverhangleft }cr<{\pgfplotstableresetcolortbloverhangright }@{}l<{\pgfplotstableresetcolortbloverhangleft }cr<{\pgfplotstableresetcolortbloverhangright }@{}l<{\pgfplotstableresetcolortbloverhangleft }c}%
\toprule $k$&\multicolumn {2}{c}{$\|u_k - u^e \|_{\Gamma _h}$}&EOC&\multicolumn {2}{c}{$\|p_k - p^e \|_{1,\Gamma _h}$}&EOC&\multicolumn {2}{c}{$\|p_k - p^e \|_{\Gamma _h}$}&EOC\\\midrule %
\pgfutilensuremath {0}&$4.69$&$\cdot 10^{-1}$&--&$1.30$&$\cdot 10^{0}$&--&$7.42$&$\cdot 10^{-2}$&--\\%
\pgfutilensuremath {1}&$1.43$&$\cdot 10^{-1}$&\pgfutilensuremath {1.72}&$6.46$&$\cdot 10^{-1}$&\pgfutilensuremath {1.01}&$2.08$&$\cdot 10^{-2}$&\pgfutilensuremath {1.83}\\%
\pgfutilensuremath {2}&$4.75$&$\cdot 10^{-2}$&\pgfutilensuremath {1.58}&$3.14$&$\cdot 10^{-1}$&\pgfutilensuremath {1.04}&$4.85$&$\cdot 10^{-3}$&\pgfutilensuremath {2.10}\\%
\pgfutilensuremath {3}&$2.06$&$\cdot 10^{-2}$&\pgfutilensuremath {1.21}&$1.57$&$\cdot 10^{-1}$&\pgfutilensuremath {0.99}&$1.19$&$\cdot 10^{-3}$&\pgfutilensuremath {2.02}\\\bottomrule %
\end {tabular}%

    \\[1ex]
    \caption{Case 4: $(k_u, k_p, k_g) = (1,2,1)$ and normal gradient stabilization $S_h = S_h^2$.}
  \end{subtable}
  \begin{subtable}[t]{1.0\textwidth}
    \centering
    \begin {tabular}{cr<{\pgfplotstableresetcolortbloverhangright }@{}l<{\pgfplotstableresetcolortbloverhangleft }cr<{\pgfplotstableresetcolortbloverhangright }@{}l<{\pgfplotstableresetcolortbloverhangleft }cr<{\pgfplotstableresetcolortbloverhangright }@{}l<{\pgfplotstableresetcolortbloverhangleft }c}%
\toprule $k$&\multicolumn {2}{c}{$\|u_k - u^e \|_{\Gamma _h}$}&EOC&\multicolumn {2}{c}{$\|p_k - p^e \|_{1,\Gamma _h}$}&EOC&\multicolumn {2}{c}{$\|p_k - p^e \|_{\Gamma _h}$}&EOC\\\midrule %
\pgfutilensuremath {0}&$3.61$&$\cdot 10^{0}$&--&$1.56$&$\cdot 10^{0}$&--&$3.58$&$\cdot 10^{-1}$&--\\%
\pgfutilensuremath {1}&$2.47$&$\cdot 10^{0}$&\pgfutilensuremath {0.55}&$5.03$&$\cdot 10^{-1}$&\pgfutilensuremath {1.63}&$1.44$&$\cdot 10^{-1}$&\pgfutilensuremath {1.31}\\%
\pgfutilensuremath {2}&$9.04$&$\cdot 10^{-1}$&\pgfutilensuremath {1.45}&$1.34$&$\cdot 10^{-1}$&\pgfutilensuremath {1.91}&$3.40$&$\cdot 10^{-2}$&\pgfutilensuremath {2.09}\\%
\pgfutilensuremath {3}&$2.65$&$\cdot 10^{-1}$&\pgfutilensuremath {1.77}&$4.09$&$\cdot 10^{-2}$&\pgfutilensuremath {1.71}&$8.94$&$\cdot 10^{-3}$&\pgfutilensuremath {1.93}\\\bottomrule %
\end {tabular}%

    \\[1ex]
    \caption{Case 5: $(k_u, k_p, k_g) = (1,2,2)$ and full gradient stabilization $S_h = S_h^1$.}
  \end{subtable}
  \begin{subtable}[t]{1.0\textwidth}
    \centering
    \begin {tabular}{cr<{\pgfplotstableresetcolortbloverhangright }@{}l<{\pgfplotstableresetcolortbloverhangleft }cr<{\pgfplotstableresetcolortbloverhangright }@{}l<{\pgfplotstableresetcolortbloverhangleft }cr<{\pgfplotstableresetcolortbloverhangright }@{}l<{\pgfplotstableresetcolortbloverhangleft }c}%
\toprule $k$&\multicolumn {2}{c}{$\|u_k - u^e \|_{\Gamma _h}$}&EOC&\multicolumn {2}{c}{$\|p_k - p^e \|_{1,\Gamma _h}$}&EOC&\multicolumn {2}{c}{$\|p_k - p^e \|_{\Gamma _h}$}&EOC\\\midrule %
\pgfutilensuremath {0}&$1.55$&$\cdot 10^{0}$&--&$1.77$&$\cdot 10^{0}$&--&$1.71$&$\cdot 10^{-1}$&--\\%
\pgfutilensuremath {1}&$2.94$&$\cdot 10^{-1}$&\pgfutilensuremath {2.40}&$4.12$&$\cdot 10^{-1}$&\pgfutilensuremath {2.10}&$1.72$&$\cdot 10^{-2}$&\pgfutilensuremath {3.31}\\%
\pgfutilensuremath {2}&$4.73$&$\cdot 10^{-2}$&\pgfutilensuremath {2.63}&$9.63$&$\cdot 10^{-2}$&\pgfutilensuremath {2.10}&$1.49$&$\cdot 10^{-3}$&\pgfutilensuremath {3.53}\\%
\pgfutilensuremath {3}&$8.64$&$\cdot 10^{-3}$&\pgfutilensuremath {2.45}&$2.33$&$\cdot 10^{-2}$&\pgfutilensuremath {2.05}&$1.15$&$\cdot 10^{-4}$&\pgfutilensuremath {3.69}\\\bottomrule %
\end {tabular}%

    \\[1ex]
    \caption{Case 6: $(k_u, k_p, k_g) = (1,2,2)$ and normal gradient stabilization $S_h = S_h^2$.}
  \end{subtable}
  \caption{Experimental order of convergence for the all 6 cases
  computed with a stabilization parameter $\tau = 0.1$.}
  \label{tab:eoc-tables}
\end{table}
\clearpage

\section*{Acknowledgements}
This research was supported in part by
the Swedish Foundation for Strategic Research Grant
No.\ AM13-0029, the Swedish Research Council Grants Nos.\ 2011-4992,
2013-4708, and Swedish strategic research programme eSSENCE.

\bibliographystyle{plainnat}
\bibliography{bibliography}

\end{document}